\newtheorem{theorem}{Theorem}[section]
\newtheorem{proposition}[theorem]{Proposition}
\newtheorem{lemma}[theorem]{Lemma}
\newtheorem{corollary}[theorem]{Corollary}
\theoremstyle{remark}
\theoremstyle{definition}
\newtheorem{definition}[theorem]{Definition}
\newtheorem{remark}[theorem]{Remark}
\numberwithin{equation}{section}
\numberwithin{figure}{section}
\numberwithin{table}{section}
\newcommand{\arccot}{\mathrm{arccot}}
\begin{document}

 \title{Supercritical deformed Hermitian-Yang-Mills equation}

 \author{Gao Chen} \thanks{Support for this research was provided by the Office of the Vice Chancellor for Research and Graduate Education at the University of Wisconsin-Madison with funding from the Wisconsin Alumni Research Foundation.}
  \address{Department of Mathematics, University of Wisconsin-Madison, Madison, WI, 53706}
 \email{gchen233@wisc.edu}

\begin{abstract}
In this paper, we provide a necessary and sufficient condition for the solvability of the supercritical deformed Hermitian-Yang-Mills equation using integrals on subvarieties. This result confirms the mirror version of the Thomas-Yau conjecture about special Lagrangian submanifolds on Calabi-Yau manifolds.
\end{abstract}

\date{}

\maketitle

\setcounter{tocdepth}{1}
\tableofcontents

\section{Introduction}

In this paper, we study the deformed Hermitian-Yang-Mills equation using integrals on subvarieties. The main motivation for studying deformed Hermitian-Yang-Mills equation is to study special Lagrangian submanifolds on Calabi-Yau manifolds. Special Lagrangian submanifolds were introduced by Harvey-Lawson \cite{HarveyLawson} and play an important role in symplectic geometry, mathematical physics and the study of minimal surfaces. On the Euclidean space, Harvey-Lawson \cite{HarveyLawson} proved that the special Lagrangian equation can be written as
\[\mathrm{Im} (e^{-\sqrt{-1}\hat \theta}\det (I+\sqrt{-1}\mathrm{Hess} F)) = 0 \] for a constant $\hat\theta$ and a function $F$. On a K\"ahler manifold $M$ (not necessarily Calabi-Yau) with a K\"ahler class $\chi$ and a real closed (1,1)-form $\omega_0$, a natural analogy of this equation is the deformed Hermitian-Yang-Mills equation
\[\mathrm{Im} (e^{-\sqrt{-1}\hat \theta}(\chi+\sqrt{-1}\omega_{\varphi})^n) = 0, \]
where $\omega_\varphi=\omega_0+\sqrt{-1}\partial\bar\partial\varphi$.
In fact, the deformed Hermitian-Yang-Mills equation was derived by Mari\~{n}o, Minasian, Moore, and Strominger \cite{MarinoMinasianMooreStrominger} and Leung-Yau-Zaslow \cite{LeungYauZaslow} as the mirror equation of the special Lagrangian equation.

At each point $p$, there exist local coordinates such that $\omega_\varphi=\sum_{i=1}^{n}\sqrt{-1}\lambda_{i}dz^i\wedge d\bar z^i$ and $\chi=\sum_{i=1}^{n}\sqrt{-1}dz^i\wedge d\bar z^i$ at $p$. Then the deformed Hermitian-Yang-Mills equation can be written as
\[\sum_{i=1}^{n}\arccot(\lambda_i)=\theta_0,\]
where $\arccot$ is the inverse function of $\cot$ with range $(0,\pi)$ and $\theta_0$ is a real number such that $\theta_0\equiv \frac{n\pi}{2}-\hat\theta \mod \pi$. When $0<\theta_0<\pi$, it is called supercritical. The lower bound $\cot(\theta_0)\chi$ for $\omega_\varphi$ exists if and only if the solution to the deformed Hermitian-Yang-Mills equation is supercritical. Therefore, the supercritical case is much more important than other cases.

A celebrated conjecture by Thomas-Yau \cite{Thomas, ThomasYau} is that the existence of special Lagrangian submanifolds is equivalent to stability. On the mirror side, the stability condition was studied in a series of works \cite{CollinsJacobYau, CollinsYau, ChuCollinsLee}. See also \cite{CollinsShi} for a survey. The stability condition for deformed Hermitian-Yang-Mills equation can be understood as a condition on integrals on subvarieties. In this paper, we prove the equivalence of the stability with the solvability of deformed Hermitian-Yang-Mills equation in the supercritical case. It confirms the mirror version of Thomas-Yau conjecture.

\begin{theorem}
(Main Theorem) Fix a K\"ahler manifold $M^n$ with a K\"ahler metric $\chi$ and a real closed (1,1)-form $\omega_0$. Assume that there exists a constant $\theta_0\in(0,\pi)$ such that
\[\int_M(\mathrm{Re}(\omega_\varphi+\sqrt{-1}\chi)^n-\cot(\theta_0)\mathrm{Im}(\omega_\varphi+\sqrt{-1}\chi)^n)=0,\]
then the followings are equivalent:

(1) There exists a smooth function $\varphi$ such that the corresponding eigenvalues $\lambda_i$ satisfy the deformed-Hermitian-Yang-Mills equation
\[\sum_{i=1}^{n}\arccot(\lambda_i)=\theta_0.\]

(2) For any smooth test family $\omega_{t,0}$, there exists a constant $\epsilon_1>0$ independent of $t, V$ such that for any $t\ge 0$ and any $p$-dimensional subvariety $V$,

\[\int_V(\mathrm{Re}(\omega_{t,0}+\sqrt{-1}\chi)^p-\cot(\theta_0)\mathrm{Im}(\omega_{t,0}+\sqrt{-1}\chi)^p)\ge (n-p)\epsilon_1\int_V \chi^p.\]

(3) There exist a test family $\omega_{t,0}$ and a constant $\epsilon_1>0$ independent of $t, V$ such that for any $t\ge 0$ and any $p$-dimensional subvariety $V$,

\[\int_V(\mathrm{Re}(\omega_{t,0}+\sqrt{-1}\chi)^p-\cot(\theta_0)\mathrm{Im}(\omega_{t,0}+\sqrt{-1}\chi)^p)\ge (n-p)\epsilon_1\int_V \chi^p.\]

Here, we call a smooth family $\omega_{t,0}, t\in[0,\infty)$ of real closed (1,1)-forms as a test family if and only if all of the following conditions hold:

(A) When $t=0$, $\omega_{t,0}=\omega_0$.

(B) For all $s>t$, $\omega_s-\omega_t$ is positive definite.

(C) There exists $T\ge 0$ such that for all $t\ge T$, $\omega_{t,0}-\cot(\frac{\theta_0}{n})\chi$ is positive definite.
\label{Main-theorem}
\end{theorem}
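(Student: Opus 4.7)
The plan is to establish the equivalences by proving $(2) \Rightarrow (3) \Rightarrow (1) \Rightarrow (2)$. The implication $(2) \Rightarrow (3)$ is immediate: the family $\omega_{t,0} := \omega_0 + t\chi$ is a test family (with $T$ chosen large enough that $\omega_0 + (t-\cot(\theta_0/n))\chi$ is positive definite), so (3) follows by specializing (2) to it.

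For $(1) \Rightarrow (2)$, my plan is to argue pointwise from the solution $\varphi$. In eigenvalue coordinates for $\omega_\varphi$ relative to $\chi$, the supercritical hypothesis places the eigenvalues $\lambda_j$ in the convex cone $\Gamma_{\theta_0} = \{\lambda : \sum \arccot \lambda_j < \theta_0\}$, on which $\sum \arccot$ is concave. A direct eigenvalue computation shows that, when wedged with any positive $(n-p,n-p)$-form, the integrand in (2) with $\omega_\varphi$ in place of $\omega_{t,0}$ dominates a multiple of $\chi^p$ pointwise. For a general test family, one writes
\[
\omega_{t,0} + \sqrt{-1}\chi \;=\; (\omega_\varphi + \sqrt{-1}\chi) + (\omega_{t,0} - \omega_0) - \sqrt{-1}\partial\bar\partial\varphi,
\]
expands binomially, uses $\omega_{t,0} - \omega_0 \ge 0$ from (B), and integrates over $V$; the $\partial\bar\partial$-exact contributions vanish by Stokes' theorem on a resolution of $V$, yielding the inequality in (2) with a uniform $\epsilon_1$ extracted from the pointwise estimate on $M$.

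The technical heart is $(3) \Rightarrow (1)$, which I would attack by a continuity method along the test family itself. Define $\theta_t \in (0,\pi)$ by the cohomological normalization for the pair $(\omega_{t,0}, \theta_t)$; then $\theta_t|_{t=0} = \theta_0$ by hypothesis, and by (C) one has $\theta_t \to 0$ as $t \to \infty$. At each $t$ I would seek a smooth $\varphi_t$ solving $\sum \arccot \lambda_j(\omega_{t,0} + \sqrt{-1}\partial\bar\partial\varphi_t) = \theta_t$. At $t = T$, the bound $\omega_{T,0} > \cot(\theta_0/n)\chi$ places the eigenvalues deep in the supercritical cone and the corresponding small-phase equation is solvable by an implicit function argument starting from $\varphi \equiv 0$. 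Openness along the path is standard: the linearization is elliptic on $\Gamma_{\theta_0}$ and invertible by the maximum principle, so the implicit function theorem applies. The task is then to close the set of admissible $t$, which reduces via the existing supercritical $C^2$ theory and elliptic bootstrap to a uniform $C^0$ bound on $\varphi_t$.

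The main obstacle will be precisely this uniform $C^0$ estimate, and I expect the stability hypothesis (3) to enter here through a contradiction/blow-up argument. If $\sup_M|\varphi_t| \to \infty$ along a sequence $t \to t_\infty$, normalize $u_t = \varphi_t / \sup|\varphi_t|$ and extract a pluripotential-theoretic limit $u_\infty$; standard arguments force its polar (or unbounded) locus to be contained in a proper analytic subvariety $V_\infty \subset M$ of some dimension $p < n$. A careful blow-up analysis of $(\omega_{t,0} + \sqrt{-1}\chi)^p$ near $V_\infty$, using the concentration of the mass of $u_t$, should drive the integral on the left-hand side of (3) evaluated on $V_\infty$ toward $0$, while the right-hand side $(n-p)\epsilon_1 \int_{V_\infty}\chi^p$ stays strictly positive, producing the contradiction. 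Identifying $V_\infty$ and making the concentration estimate precise (likely via currents of integration, a resolution of singularities, and monotonicity properties of relative Monge--Amp\`ere-type masses in the mixed form $\omega + \sqrt{-1}\chi$) is the most delicate step and the main obstacle. Once the $C^0$ bound is in hand, one passes $t \to 0$ to extract the desired smooth solution of the dHYM equation at the original data.
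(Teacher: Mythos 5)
Your skeleton (continuity method along the test family, openness by ellipticity, the whole weight on a uniform estimate at the endpoint) is in the right spirit, and $(2)\Rightarrow(3)$ and $(1)\Rightarrow(2)$ are essentially fine; for $(1)\Rightarrow(2)$ the paper simply applies Lemma~8.2 of Collins--Jacob--Yau twice (once to get monotonicity in $t$ of the integral via $P_\chi(\omega_{t,\varphi})<\theta_0$, once to get a pointwise positivity margin), which is cleaner than your binomial expansion but comparable.

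The genuine gap is in your treatment of $(3)\Rightarrow(1)$: the blow-up/contradiction argument you sketch for a uniform $C^0$ bound on $\varphi_t$ is not what actually closes the continuity method, and as stated it would not work. The $C^0$ estimate in this problem (Corollary~\ref{A-priori-estimate}) is \emph{conditional}: it requires a smooth \emph{subsolution} $\omega_0\in\Gamma_{\chi,\theta_0,\Theta_0}$, which at $t=0$ is exactly what is missing. The mechanism you propose --- normalize $u_t=\varphi_t/\sup|\varphi_t|$, extract a pluripotential limit, claim a proper analytic polar locus $V_\infty$, and ``drive the integral on $V_\infty$ to $0$ by mass concentration'' --- is hand-waving at the hardest point: there is no supplied reason that the unbounded locus is analytic of dimension $p<n$ (Siu's theorem gives analyticity only of Lelong superlevel sets of a suitable current, not of a generic normalized limit), and no actual computation showing mass concentration of the mixed form $(\omega_{t,0}+\sqrt{-1}\chi)^p$ forces $\int_{V_\infty}(\mathrm{Re}-\cot\theta_0\,\mathrm{Im})\to0$. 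The paper circumvents a direct uniform $C^0$ bound entirely. Instead it proves (Proposition~\ref{Current-sub-solution}) that the solutions $\omega_t$ for $t>t_\infty$ converge (after a pushforward construction on $M\times M$, Lemma~\ref{Subadditive-submatrix} and Proposition~\ref{Push-forward}) to a closed positive \emph{current} $\omega_4\in[\omega_0-\epsilon_3\chi]$ lying in $\bar\Gamma_{\chi,\theta_0,\Theta_0}$ in a regularized sense. Then Section~4 converts this current subsolution into a genuine smooth one: the Lelong-number superlevel set $Y$ of the potential $\varphi_4$ is analytic by Siu's theorem, one resolves $Y$ by Hironaka's desingularization, applies the entire Proposition~\ref{Main-induction} by \emph{induction on dimension} to construct a smooth subsolution near $Y$ (this is where condition~(3) is really used, to supply the stability hypothesis on lower-dimensional subvarieties), and finally glues with the regularized maximum. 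Your proposal contains no analogue of the current subsolution, the pushforward estimate, the dimensional induction, or the resolution-of-singularities gluing, and these are not cosmetic --- they are the content of $(3)\Rightarrow(1)$.
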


\begin{remark}
The main reason for introducing the test family is to correctly choose the branch of the $\cot$ function. In the special case when $\omega_0>0$, an important choice of the test family is $\omega_{t,0}=t\omega_0$. In general cases, $\omega_{t,0}=\omega_0+t\chi$ is always a test family. However, more choices of test families are allowed.
\end{remark}

The main strategy to prove the main theorem is the same as the article \cite{Chen}. In fact, the $\theta_0\in(0,\frac{\pi}{4})$ case of the main theorem has already been proved in \cite{Chen}. The main reason for the restriction of the range of $\theta_0$ in that paper was because the function $\sum_{i=1}^{n}\arccot(\lambda_i)$ is no longer convex for $\theta_0>\frac{\pi}{2}$ and is not convex enough for $\theta_0\in[\frac{\pi}{4},\frac{\pi}{2}]$. The key observation of this paper is that the function $-\cot(\sum_{i=1}^{n}\arccot(\lambda_i))$ instead is convex on the whole range $\theta_0\in (0,\pi)$. See also \cite{HarveyLawson2020} and \cite{Takahashi} for slightly different observations.

For simplicity, we define the following notations:

\begin{definition}
Define $P:\mathbb{R}^n\to (0,(n-1)\pi)$ and $Q:\mathbb{R}^n\to (0,n\pi)$ by
\[P(\lambda_1,...\lambda_n)=\max_{i=1}^{n}(\sum_{k\not=i}\arccot(\lambda_k)),\]
and \[Q(\lambda_1,...\lambda_n)=\sum_{k=1}^{n}\arccot(\lambda_k).\]
Let $0<\theta_0<\Theta_0<\pi$ be any constants. Define $\Gamma_{\theta_0,\Theta_0}$ be the subset of $\mathbb{R}^n$ such that $P$ is smaller than $\theta_0$ and $Q$ is smaller than $\Theta_0$. Its closure is denoted by $\bar\Gamma_{\theta_0,\Theta_0}$.

Let $A, B$ be Hermitian matrices. Assume that $A$ is positive definite. Then $P_A(B)$ is defined as $P(\lambda_1,...\lambda_n)$, where $\lambda_i$ are the eigenvalues of the matrix $A^{-1}B$. The function $Q_A(B)$ is defined as $Q(\lambda_1,...\lambda_n)$. The set $\Gamma_{A,\theta_0,\Theta_0}$ is defined as the set of all matrices $B$ such that $(\lambda_1,...,\lambda_n)\in \Gamma_{\theta_0,\Theta_0}$. Its closure is denoted by $\bar\Gamma_{A,\theta_0,\Theta_0}$.
\end{definition}

The more suitable version for induction is the following:

\begin{proposition}
Fix a K\"ahler manifold $M^n$ with a K\"ahler metric $\chi$ and a real closed (1,1)-form $\omega_0$. Let $\theta_0\in(0,\pi)$ be a constant and let $\Theta_0\in(\theta_0,\pi)$ be another constant. Then there exists a constant $\epsilon_2>0$ only depending on $n, \theta_0, \Theta_0$ such that the following holds:

Assume that (1) When $n\ge 4$, $f>-\epsilon_2$ is a smooth function satisfying \[\int_M f\chi^n=\int_M(\mathrm{Re}(\omega_\varphi+\sqrt{-1}\chi)^n-\cot(\theta_0)\mathrm{Im}(\omega_\varphi+\sqrt{-1}\chi)^n)\ge 0;\]

(2) When $n=1,2,3$, $f\ge 0$ is a constant satisfying \[\int_M f\chi^n=\int_M(\mathrm{Re}(\omega_\varphi+\sqrt{-1}\chi)^n-\cot(\theta_0)\mathrm{Im}(\omega_\varphi+\sqrt{-1}\chi)^n)\ge 0;\]

(3) There exists a test family $\omega_{t,0}$ and a constant $\epsilon_1>0$ independent of $t, V$ such that for any $t\ge 0$ and any $p$-dimensional subvariety $V$,

\[\int_V(\mathrm{Re}(\omega_{t,0}+\sqrt{-1}\chi)^p-\cot(\theta_0)\mathrm{Im}(\omega_{t,0}+\sqrt{-1}\chi)^p)\ge (n-p)\epsilon_1\int_V \chi^p.\]

Then there exists a smooth function $\varphi$ satisfying
\[\mathrm{Re}(\omega_\varphi+\sqrt{-1}\chi)^n-\cot(\theta_0)\mathrm{Im}(\omega_\varphi+\sqrt{-1}\chi)^n-f\chi^n=0,\] and $\omega_\varphi=\omega_0+\sqrt{-1}\partial\bar\partial\varphi\in \Gamma_{\chi,\theta_0,\Theta_0}$.

\label{Main-induction}
\end{proposition}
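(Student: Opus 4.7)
The plan is to mirror the continuity-method strategy of \cite{Chen}, which settled the case $\theta_0\in(0,\pi/4)$, while systematically replacing the operator $Q(\lambda)=\sum\arccot(\lambda_i)$ by its composition $G(\lambda):=-\cot(Q(\lambda))$. By the convexity observation highlighted in the introduction, $G$ is convex on the whole supercritical range. Writing the deformed Hermitian-Yang-Mills equation in the form $G(\chi^{-1}\omega_\varphi)=\tilde h(f,\lambda)$ turns it into a concave (hence Evans-Krylov-compatible) fully nonlinear equation that is uniformly elliptic on any compact subset of the open cone $\Gamma_{\chi,\theta_0,\Theta_0}$. In this reformulation the implicit function theorem in H\"older spaces and the standard Evans-Krylov plus Schauder estimates are available essentially off the shelf, so the hard work is reduced to uniform a priori estimates and to staying in the cone.

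I would set up the continuity path along the test family $\omega_{t,0}$: start at a large $t=T$, where condition (C) gives $\omega_{T,0}-\cot(\theta_0/n)\chi>0$, forcing every eigenvalue of $\chi^{-1}\omega_{T,0}$ into the strict interior of $\Gamma_{\chi,\theta_0,\Theta_0}$ with $Q<\theta_0$, so a starting solution with $\varphi$ close to $0$ and a compatible right-hand side (obtained by interpolation) can be produced. Decrease $t$ monotonically to $0$, simultaneously turning on the prescribed $f\chi^n$, and prove that the set of reachable parameters is both open and closed. Openness follows from the implicit function theorem applied to the linearization of $G$ at a cone-interior solution. For closedness one needs a priori estimates $\|\varphi\|_{C^{k,\alpha}}\le C$ uniform in the continuity parameter: a $C^0$ bound by an ABP or Moser iteration adapted to the concave form of the equation, a second order estimate via a maximum principle argument that uses the convexity of $G$ together with a barrier built from the test family $\omega_{t,0}$, and higher order bounds from Evans-Krylov plus Schauder.

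The main obstacle, and the reason the statement is formulated as an inductive proposition on $n$ with a mildly nonzero perturbation $f$ allowed, is ruling out that $\chi^{-1}\omega_\varphi$ approaches the boundary of $\Gamma_{\chi,\theta_0,\Theta_0}$. The constraint $Q<\Theta_0$ is kept strictly by propagation from the starting solution and a direct maximum-principle bound on $Q$, since $\Theta_0<\pi$ is built into the cone. The genuinely delicate constraint is $P<\theta_0$: if the estimates fail because $P(\chi^{-1}\omega_\varphi)\to\theta_0$, a blow-up or subsequential limit analysis extracts a proper subvariety $V\subset M$ of some dimension $p<n$ on which the left hand side of the stability inequality in (3) collapses to zero. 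One then applies the proposition inductively on $V$ to exhibit a solution of the lower dimensional equation (with a possibly nonzero but controlled right-hand side, which is precisely why $f$ is allowed to be nonzero in the statement and restricted to a constant for $n\le 3$ where no further induction is available), and (3) gives a contradiction with the strict positivity $(n-p)\epsilon_1\int_V\chi^p>0$. Choosing $\epsilon_2$ small relative to $\epsilon_1$ and to $n,\theta_0,\Theta_0$ at the outset ensures the perturbation $f$ cannot spoil this contradiction. With these three closedness obstacles handled, the continuity method produces the desired $\varphi$.
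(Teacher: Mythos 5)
Your overall framework (continuity method along the test family $\omega_{t,0}$, the reformulation via the convex function $-\cot(Q)$, openness by the implicit function theorem, closedness requiring a priori estimates) matches the paper. However, the heart of the argument — the closedness step, and in particular how the inductive hypothesis on dimension enters — is handled in a genuinely different and, as written, incomplete way.

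You propose a proof by contradiction: if $P(\chi^{-1}\omega_{\varphi_t})\to\theta_0$, a ``blow-up or subsequential limit analysis extracts a proper subvariety $V$ on which the stability inequality collapses,'' the proposition is then applied inductively on $V$, and strict positivity in (3) gives a contradiction. This sketch has several gaps that are precisely where the real work lies. First, extracting a destabilizing subvariety from a degenerating sequence of solutions is not an off-the-shelf compactness step; the paper achieves something in this spirit, but only through a specific chain of constructions: Proposition~\ref{Current-sub-solution} (proved via the $M\times M$ fiber-integration in Proposition~\ref{Push-forward} and the submatrix Lemma~\ref{Subadditive-submatrix}) produces a \emph{current} $\omega_4\in[\omega_0-\epsilon_3\chi]$ lying in $\bar\Gamma_{\chi,\theta_0,\Theta_0}$ in a regularized sense, and the ``bad set'' is then identified as the locus $Y$ where the Lelong number of the potential exceeds a threshold, which is a subvariety by Siu's theorem. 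Second, that subvariety $Y$ need not be smooth, so one cannot directly ``apply the proposition inductively on $V$''; the paper handles this by Hironaka resolution, working on a blow-up $\tilde M$ with a modified K\"ahler form $\pi^*\chi+\epsilon_{33}\omega_{30}$ and a modified test family, and carefully controlling the transfer of the stability inequality to $\tilde M$. Third, and most importantly, the paper's use of the induction is \emph{constructive}, not by contradiction: the inductive hypothesis is invoked on $\tilde M$ to produce a smooth function whose associated form lies in $\Gamma$ near $\pi^{-1}(Y)$, and this local smooth subsolution is patched with the smoothing of the current via a regularized maximum to obtain a global smooth subsolution $\omega_{23}\in\Gamma_{\chi,\theta_0,\Theta_0}$, after which Proposition~\ref{Solvability-assuming-sub-solution} closes the argument. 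Your contradiction route would still have to carry out essentially all of this machinery; without it, the claim that a destabilizing $V$ appears and that the induction can be run on it is not justified.

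A smaller point: your explanation of why $f$ is allowed to be a nonconstant function for $n\ge 4$ but must be a nonnegative constant for $n\le 3$ is speculative. In the paper this split comes directly from Lemma~\ref{Property-of-F}(4): the strict concavity estimate (with the quantitative $-\epsilon_{10}\cdot(\cdots)$ term needed to absorb the error coming from a nonconstant $f$) is only available for $n\ge 4$, whereas for $n=1,2,3$ one only gets nonstrict concavity and thus cannot tolerate $f<0$.
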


\begin{remark}
Proposition \ref{Main-induction} is similar to Theorem 5.1 of \cite{Chen} and the results in \cite{Pingali}.
\end{remark}

In Step 1, we need to prove the following proposition:

\begin{proposition}
Fix a K\"ahler manifold $M^n$ with a K\"ahler metric $\chi$ and a real closed (1,1)-form $\omega_0$. Let $\theta_0\in(0,\pi)$ be a constant and let $\Theta_0\in(\theta_0,\pi)$ be another constant. Then there exists a constant $\epsilon_2>0$ only depending on $n, \theta_0, \Theta_0$ such that the following holds:

Assume that (1) When $n\ge 4$, $f>-\epsilon_2$ is a smooth function satisfying \[\int_M f\chi^n=\int_M(\mathrm{Re}(\omega_\varphi+\sqrt{-1}\chi)^n-\cot(\theta_0)\mathrm{Im}(\omega_\varphi+\sqrt{-1}\chi)^n)\ge 0;\]

(2) When $n=1,2,3$, $f\ge 0$ is a constant satisfying \[\int_M f\chi^n=\int_M(\mathrm{Re}(\omega_\varphi+\sqrt{-1}\chi)^n-\cot(\theta_0)\mathrm{Im}(\omega_\varphi+\sqrt{-1}\chi)^n)\ge 0;\]

(3) $\omega_0\in \Gamma_{\chi,\theta_0,\Theta_0}$.

Then there exists a smooth function $\varphi$ satisfying
\[\mathrm{Re}(\omega_\varphi+\sqrt{-1}\chi)^n-\cot(\theta_0)\mathrm{Im}(\omega_\varphi+\sqrt{-1}\chi)^n-f\chi^n=0,\] and $\omega_\varphi=\omega_0+\sqrt{-1}\partial\bar\partial\varphi\in\Gamma_{\chi,\theta_0,\Theta_0}$.
\label{Solvability-assuming-sub-solution}
\end{proposition}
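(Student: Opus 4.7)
The plan is a continuity method deforming the right-hand side. Set
\[F(\omega):=\mathrm{Re}(\omega+\sqrt{-1}\chi)^n-\cot(\theta_0)\mathrm{Im}(\omega+\sqrt{-1}\chi)^n,\]
let $h_0\chi^n=F(\omega_0)$, and consider the family $F(\omega_{\varphi_t})=f_t\chi^n$ with $f_t=(1-t)h_0+tf$ for $t\in[0,1]$. The integrals $\int_M f_t\chi^n$ are independent of $t$ because $\int_M F(\omega_\varphi)$ is purely cohomological. At $t=0$, $\varphi_0\equiv 0$ is a solution with $\omega_{\varphi_0}=\omega_0\in\Gamma_{\chi,\theta_0,\Theta_0}$ by hypothesis (3). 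Let $T\subset[0,1]$ denote the set of parameters at which a smooth $\varphi_t$ with $\omega_{\varphi_t}\in\Gamma_{\chi,\theta_0,\Theta_0}$ exists; I will show $T$ is non-empty, open, and closed.

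Openness uses the implicit function theorem on the subspace of $C^{2,\alpha}$ functions orthogonal to constants with respect to $\chi^n$. At any $\omega\in\Gamma_{\chi,\theta_0,\Theta_0}$, simultaneous diagonalization with $\chi$ exhibits the linearization $DF(\omega)(\psi)=\sum_i a_i\psi_{i\bar i}$ with all $a_i>0$, since every phase $\arccot\lambda_i\in(0,\pi)$ and $P<\theta_0$ keeps the relevant sines bounded away from zero; thus the linearization is uniformly elliptic and surjective modulo constants.

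Closedness requires uniform $C^{2,\alpha}$ estimates along $T$. Normalizing by $\sup\varphi_t=0$, the $C^0$ bound follows from the Moser iteration / Alexandrov--Bakelman--Pucci argument that was used in the corresponding step of \cite{Chen}. Preservation of the cone $\omega_{\varphi_t}\in\bar\Gamma_{\chi,\theta_0,\Theta_0}$ exploits the identity
\[F(\omega_\varphi)/\chi^n=\frac{\sin(\theta_0-Q(\omega_\varphi))}{\sin\theta_0\prod_{i=1}^n\sin(\arccot\lambda_i)},\]
so that if $Q(\omega_{\varphi_t})$ ever reached $\Theta_0$, the right-hand side would be bounded above by an explicit negative constant $-c(\theta_0,\Theta_0,\sup_i|\lambda_i|)$, contradicting $f_t>-\epsilon_2$ once $\epsilon_2$ is chosen small enough; the constraint $P(\omega_{\varphi_t})<\theta_0$ is maintained by continuity in $t$ starting from the strict sub-solution $\omega_0$, combined with the second-order estimate below.

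The main obstacle is the $C^2$ estimate, and this is where the key convexity observation of the paper is essential. Because $Q(\lambda)=\sum_i\arccot\lambda_i$ is not concave in $\lambda$ once $\theta_0\ge\pi/2$, the classical concave fully nonlinear machinery does not apply to $F$ directly. However, as noted in the introduction, $-\cot(Q(\lambda))$ is convex on $\{Q\in(0,\pi)\}$ for every $\theta_0\in(0,\pi)$. Composing with a monotone function of $f_t$ recasts the equation as a concave fully nonlinear equation in $\omega_{\varphi_t}$ on $\Gamma_{\chi,\theta_0,\Theta_0}$. The Hou--Ma--Wu / Sz\'ekelyhidi second order estimate then applies with $\omega_0$ playing the role of the $\mathcal{C}$-subsolution (precisely hypothesis (3)), producing a uniform upper bound on the eigenvalues of $\chi^{-1}\omega_{\varphi_t}$. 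Combined with the $C^0$ bound, one obtains uniform ellipticity, and Evans--Krylov together with standard bootstrapping upgrade this to uniform $C^{k,\alpha}$ estimates, yielding closedness of $T$ and hence the proposition.
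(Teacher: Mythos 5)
Your proposal correctly identifies the key structural ingredients — the continuity method, openness via the implicit function theorem, the $C^0$/$C^2$ estimate route, and the convexity of $-\cot(Q)$ as the decisive observation — but the specific continuity path you choose creates a genuine gap. You interpolate $f_t=(1-t)h_0+tf$ where $h_0\chi^n=F(\omega_0)$, and rely on the a priori estimates (and in particular the concavity of the operator) along this path. However, the concavity established in Lemma \ref{Property-of-F}(4) and needed for Evans--Krylov is \emph{not} a property of $-\cot(Q)$ alone: the operator being solved is $\cot(Q)-\frac{f}{\mathrm{Im}\prod(\lambda_k+\sqrt{-1})}-\cot(\theta_0)$, whose Hessian receives a contribution with sign opposite to $f$. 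The paper shows this extra term is concave only when $f\ge 0$ (using the positivity of the Hessian of $1/\mathrm{Im}\prod(\lambda_k+\sqrt{-1})$), and when $n\ge 4$ one can absorb a small negative $f\ge-\epsilon_2$ thanks to the quantitative surplus $-\epsilon_{10}\frac{\prod(1+\lambda_k^2)}{(\mathrm{Im}\prod(\lambda_k+\sqrt{-1}))^3}\sum\frac{u_i^2}{1+\lambda_i^2}$; for $n=1,2,3$ this surplus is unavailable, which is why the proposition restricts $f$ to be a nonnegative constant there. Your $h_0=F(\omega_0)/\chi^n$ is a smooth function that, whenever $Q_\chi(\omega_0)>\theta_0$ somewhere, is negative there — and there is no reason for $\inf h_0\ge -\epsilon_2$, so the intermediate targets $f_t$ can leave the admissible range and the concavity/a priori estimates break down along the path. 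For $n\le 3$ the situation is worse: $h_0$ is generically non-constant, so the linear interpolation never lies in the class of data the a priori estimates cover. Your remark that ``composing with a monotone function of $f_t$ recasts the equation as a concave equation'' does not resolve this, since $f_t$ enters the operator additively through a coefficient depending on the unknown, not through a left composition.

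The paper avoids this by a three-stage continuity method that keeps the data admissible throughout. First, along $\omega_{1,s}=s(\omega_0-\cot(\theta_0)\chi)+\cot(\frac{\theta_0}{2n})\chi$, the form satisfies $\omega_{1,s}\ge\cot(\frac{\theta_0}{2n})\chi$, so $Q\le\frac{\theta_0}{2}$ and the normalization constant $f_{1,s}$ is automatically $\ge 0$. Second, along $\omega_{2,s}=\omega_0+s\chi$, monotonicity (Lemma 8.2 of \cite{CollinsJacobYau}) guarantees $f_{2,s}\ge0$ and $P_\chi(\omega_{2,s})\le P_\chi(\omega_0)$, $Q_\chi(\omega_{2,s})\le Q_\chi(\omega_0)$. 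Only in the third stage is $f$ deformed, via $f_{3,s}=sf+(1-s)\frac{\int_M f\chi^n}{\int_M\chi^n}$, which stays above $-\epsilon_2$ because the average is $\ge 0$ by hypothesis. You would need to replace your single path by something with these safeguards, and you should also make explicit that preservation of the strict cone membership $\omega_{\varphi_t}\in\Gamma_{\chi,\theta_0,\Theta_0}$ (rather than just $\bar\Gamma$) is Lemma \ref{Property-of-F}(5), a boundary analysis that your ``explicit negative constant depending on $\sup_i|\lambda_i|$'' sketch does not carry out, since it presupposes an eigenvalue bound which is what the cone condition is supposed to deliver.
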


\begin{remark}
Proposition \ref{Solvability-assuming-sub-solution} is similar to Theorem 5.2 of \cite{Chen} and the results in \cite{CollinsJacobYau}.
\end{remark}

In Step 2, we need to prove the following:

\begin{proposition}
Fix a K\"ahler manifold $M^n$ with a K\"ahler metric $\chi$ and a test family $\omega_{t,0}$ of real closed (1,1)-forms. Suppose that for all $t>0$, there exist a constant $c_t>0$ and a smooth function $\varphi_t$ such that $\omega_t=\omega_{t,0}+\sqrt{-1}\partial\bar\partial\varphi_t$ satisfies \[\mathrm{Re}(\omega_t+\sqrt{-1}\chi)^n-\cot(\theta_0)\mathrm{Im}(\omega_t+\sqrt{-1}\chi)^n-c_t\chi^n=0,\] and $\omega_t\in \Gamma_{\chi,\theta_0,\Theta_0}$. Then there exist a constant $\epsilon_3>0$ and a current $\omega_4\in[\omega_0-\epsilon_3\chi]$ such that $\omega_4\in \bar\Gamma_{\chi,\theta_0,\Theta_0}$ in the sense of Definition \ref{Definition-current-sub-solution}.
\label{Current-sub-solution}
\end{proposition}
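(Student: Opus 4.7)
My strategy is to pass to a weak limit of the family $\{\omega_t\}_{t>0}$ as $t\to 0^+$ and realize this limit as a current sub-solution in a slightly smaller cohomology class. The first step is to normalize $\varphi_t$ by $\sup_M\varphi_t = 0$ and establish a uniform $L^1$ bound. The key pointwise observation is that $\omega_t\in\Gamma_{\chi,\theta_0,\Theta_0}$ forces $P_\chi(\omega_t)<\theta_0$ and $Q_\chi(\omega_t)<\Theta_0$, which together yield a uniform pointwise lower bound $\omega_t\geq\kappa\chi$ for some $\kappa=\kappa(\theta_0,\Theta_0)$ (indeed, for $n\geq 2$ one may take $\kappa=\cot\theta_0$ by choosing $i\neq j$ in the definition of $P$ to extract $\arccot\lambda_j<\theta_0$). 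Consequently $\sqrt{-1}\partial\bar\partial\varphi_t\geq -C\chi$ for a constant $C$ independent of small $t>0$, so the normalized family $\{\varphi_t\}$ is uniformly $C\chi$-plurisubharmonic, and standard compactness of quasi-plurisubharmonic functions gives $\|\varphi_t\|_{L^1(\chi^n)}\leq K$.

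Next, extract a subsequence $t_k\to 0^+$ with $\varphi_{t_k}\to\varphi_*$ in $L^1$. Since $\omega_{t_k,0}\to\omega_0$ smoothly, the currents $\omega_{t_k}$ converge weakly to $\omega_*:=\omega_0+\sqrt{-1}\partial\bar\partial\varphi_*$, a closed $(1,1)$-current in $[\omega_0]$ with $\omega_*+C\chi\geq 0$. Because every $\omega_{t_k}$ lies in the open cone $\Gamma_{\chi,\theta_0,\Theta_0}$, the limit $\omega_*$ inherits the closed cone condition in the weak (current) sense encoded by Definition \ref{Definition-current-sub-solution}.

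To produce a current in the shifted class $[\omega_0-\epsilon_3\chi]$ rather than $[\omega_0]$, I exploit the test-family structure: condition (B) (with $t=0$) makes $\omega_{s,0}-\omega_0$ pointwise strictly positive for each $s>0$, so on the compact manifold $M$ there is $\delta_s>0$ with $\omega_{s,0}\geq\omega_0+\delta_s\chi$. Combining this strict monotonicity with the weak limit above lets one absorb a uniform $-\epsilon_3\chi$ into the cohomology class while preserving the weak cone condition, producing the desired $\omega_4\in[\omega_0-\epsilon_3\chi]$ with $\omega_4\in\bar\Gamma_{\chi,\theta_0,\Theta_0}$.

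The main obstacle is the uniformity of $\epsilon_3$. In the setting of the Main Theorem the cohomological integral vanishes at $t=0$, forcing $c_t\to 0$, so the strictness of the PDE itself provides no residual slack as $t\to 0^+$. The required $\epsilon_3>0$ must therefore be extracted entirely from the test-family conditions (B) and (C), and a careful quantitative argument is needed to ensure that it is independent of the approximating sequence and of the particular value of $t$ used in the construction. This parallels the analogous step in \cite{Chen} and is the most delicate part of the proof.
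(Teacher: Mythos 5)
Your approach of taking a direct weak limit of the solutions $\omega_t$ as $t\to 0^+$ is genuinely different from the paper's and, as you yourself flag at the end, it has a gap that is not merely ``delicate'' but fatal to the argument as stated. The $L^1$ compactness step is fine: the cone condition $P_\chi(\omega_t)<\theta_0$ gives the uniform lower bound $\omega_t\ge\cot(\theta_0)\chi$ for $n\ge 2$, hence $\sqrt{-1}\partial\bar\partial\varphi_t\ge -C\chi$ for small $t$ (since $\omega_{t,0}$ is uniformly bounded near $t=0$), and quasi-psh compactness produces a limit current $\omega_*\in[\omega_0]$ lying in $\bar\Gamma_{\chi,\theta_0,\Theta_0}$ in the sense of Definition \ref{Definition-current-sub-solution}. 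The problem is that this is a statement in the class $[\omega_0]$, whereas Proposition \ref{Current-sub-solution} demands a subsolution in the \emph{strictly smaller} class $[\omega_0-\epsilon_3\chi]$, and that extra $\epsilon_3>0$ of room is exactly what is needed for the regularization in Section 4. Your proposed fix, invoking condition (B) to get $\omega_{s,0}\ge\omega_0+\delta_s\chi$, does not close the gap: $\delta_s\to 0$ as $s\to 0^+$, so no uniform $\epsilon_3$ is extracted, and the cohomology classes $[\omega_{t_k,0}]$ converge to $[\omega_0]$, not to $[\omega_0-\epsilon_3\chi]$. Moreover, since $c_t\to 0$ in the relevant application, the limit $\omega_*$ sits on the \emph{boundary} of $\bar\Gamma_{\chi,\theta_0,\Theta_0}$; subtracting $\epsilon_3\chi$ from a boundary current generically leaves the cone, so there is no soft way to convert $\omega_*\in[\omega_0]$ into the desired $\omega_4\in[\omega_0-\epsilon_3\chi]$.

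The paper avoids this by a mass-concentration argument on $M\times M$ rather than a direct weak limit on $M$. Working with $\chi_{M\times M}=\pi_1^*\chi+\pi_2^*\chi$, one considers forms $\omega_{19}=\pi_1^*\omega_0+C_{14}\pi_2^*\chi+\sqrt{-1}\partial\bar\partial\varphi_{19}$ in a shifted cone on $M\times M$ and forms the explicit push-forward $\omega_{20}$ of Proposition \ref{Push-forward}. Lemma \ref{Subadditive-submatrix} (a Schur-complement subadditivity for $P_I$ and $Q_I$) together with the convexity of $\omega\mapsto 1/(\cot(Q_\chi(\omega))-\cot(\Theta_{18}))$ from Lemma \ref{Property-of-F} (9) shows that $\omega_{20}\in\Gamma_{\chi,\theta_0,\Theta_0}$. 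The role of the diagonal singularity that one builds into $\varphi_{19}$ is precisely to make the pushed-down current live in a class of the form $[\omega_0-\epsilon_3\chi]$ with a \emph{uniform} $\epsilon_3>0$; the required compactness for the push-forward is controlled through the pointwise bound $\mathrm{Re}\prod(\lambda_k+\sqrt{-1})-\cot(\theta_0)\mathrm{Im}\prod(\lambda_k+\sqrt{-1})\le C_{21}\prod(\lambda_k-\cot(\Theta_{18}))$ on $\bar\Gamma_{\theta_{15},\Theta_{16}}$ and the positivity of $\omega-\cot(\Theta_{18})\chi$. So the $\epsilon_3$ does not come from the test family or from the PDE's residual strictness (which vanishes as $t\to 0^+$), but from the geometry of the product construction itself. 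To repair your argument you would need to replace steps (5)--(6) by this $M\times M$ push-forward machinery or something equivalent.
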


\begin{remark}
Proposition \ref{Current-sub-solution} is similar to Theorem 5.4 of \cite{Chen}.
\end{remark}

In Step 3, we need to do the regularization similar to Section 4 of \cite{Chen}.

In Section 2, we will prove Proposition \ref{Solvability-assuming-sub-solution} by proving the preliminary conditions to apply the results in \cite{Szekelyhidi}. In Section 3, we will provide Definition \ref{Definition-current-sub-solution} and prove Proposition \ref{Current-sub-solution}. In Section 4, we finish the proof of Proposition \ref{Main-induction} and Theorem \ref{Main-theorem}.

\section{The analysis part}

In this section, we prove Proposition \ref{Solvability-assuming-sub-solution} by proving the preliminary conditions to apply the results in \cite{Szekelyhidi}. The key lemma is the following:

\begin{lemma}
For any $0<\theta_0<\Theta_0<\pi$, there exist a constant $\epsilon_2>0$ only depending on $n, \theta_0, \Theta_0$ and constants $\epsilon_5>0, C_6>0$ only depending on $n, \Theta_0$ such that the following holds:

Assume that $f$ is a parameter such that $f \ge 0$ when $n=1,2,3$, and $f\ge -\epsilon_2$ when $n\ge 4$. Then the function $F:\bar\Gamma_{\theta_0,\Theta_0}\to\mathbb{R}$ defined by
\[F(\lambda_1,...\lambda_n) =\frac{\mathrm{Re}\prod_{k=1}^{n}(\lambda_k+\sqrt{-1})}{\mathrm{Im}\prod_{k=1}^{n}(\lambda_k+\sqrt{-1})}
-\frac{f}{\mathrm{Im}\prod_{k=1}^{n}(\lambda_k+\sqrt{-1})}-\cot(\theta_0)\] satisfies the following properties:

(1) $\frac{1}{\mathrm{Im}\prod_{k=1}^{n}(\lambda_k+\sqrt{-1})}\le \frac{1}{C_6}$;

(2) \[|\frac{\partial}{\partial\lambda_i}\frac{1}{\mathrm{Im}\prod_{k=1}^{n}(\lambda_k+\sqrt{-1})}|\le \frac{1}{\sqrt{C_6}}\sqrt{\frac{\prod_{k=1}^{n}(1+\lambda_k^2)}{(\mathrm{Im}\prod_{k=1}^{n}(\lambda_k+\sqrt{-1}))^3}} \frac{1}{\sqrt{1+\lambda_i^2}};\]

(3) $\frac{\partial F}{\partial\lambda_i}>0$;

(4) When $n\ge 4$, for any real numbers $u_i$,
\[\sum_{i,j=1}^{n}\frac{\partial^2 F}{\partial\lambda_i\partial\lambda_j}u_iu_j\le -\epsilon_5\frac{\prod_{k=1}^{n}(1+\lambda_k^2)}{(\mathrm{Im}\prod_{k=1}^{n}(\lambda_k+\sqrt{-1}))^3} \sum_{i=1}^{n}\frac{u_i^2}{1+\lambda_i^2};\]

When $n=1,2,3$, for any real numbers $u_i$, $\sum_{i,j=1}^{n}\frac{\partial^2 F}{\partial\lambda_i\partial\lambda_j}u_iu_j\le 0$;

(5) If $\lambda\in\bar\Gamma_{\theta_0,\Theta_0}$, and $F(\lambda)=0$, then $\lambda\in\Gamma_{\theta_0,\Theta_0}$;

(6) For any $\lambda\in\Gamma_{\theta_0,\Theta_0}$, the set
\[\{\lambda'\in\Gamma_{\theta_0,\Theta_0}: F(\lambda')=0, \lambda'_i\ge\lambda_i, \ \text{for all}\ i=1,2,3,...,n\}\] is bounded, where the bound depends on $n,\theta_0,\Theta_0,\lambda, |f|$;

(7) $\bar\Gamma_{\theta_0,\Theta_0}$ is convex;

(8) $\frac{\partial}{\partial\lambda_i} F(\lambda)\le\frac{\partial}{\partial\lambda_j} F(\lambda)$ if $\lambda_i\ge\lambda_j$;

(9) For any positive definite Hermitian matrix $A$, the function $F_A:\bar\Gamma_{A,\theta_0,\Theta_0}\to\mathbb{R}^n$ is concave, where $F_A(B)=F(\lambda_1,...\lambda_n)$ if $\lambda_i$ are the eigenvalues of $A^{-1}B$;

(10) For any positive definite Hermitian matrix $A$, the set $\bar\Gamma_{A,\theta_0,\Theta_0}$ is convex.

\label{Property-of-F}
\end{lemma}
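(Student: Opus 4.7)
The plan is to pass to angular variables $\theta_k=\arccot(\lambda_k)\in(0,\pi)$, under which $\lambda_k+\sqrt{-1}=e^{\sqrt{-1}\theta_k}/\sin\theta_k$ and hence $\prod_{k=1}^n(\lambda_k+\sqrt{-1})=e^{\sqrt{-1}Q}/\prod_k\sin\theta_k$ with $Q=\sum_k\theta_k$. In these variables
\[
\mathrm{Im}\prod_{k=1}^n(\lambda_k+\sqrt{-1})=\frac{\sin Q}{\prod_k\sin\theta_k},\qquad F=\cot Q-f\,\frac{\prod_k\sin\theta_k}{\sin Q}-\cot\theta_0.
\]
On $\bar\Gamma_{\theta_0,\Theta_0}$ we have $Q\in(0,\Theta_0]$ and each $\theta_k\le Q<\pi$, so $\sin Q$ and every $\sin\theta_k$ are strictly positive. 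All derivatives are then computed via $\partial\theta_i/\partial\lambda_i=-\sin^2\theta_i$, reducing every claim to a trigonometric identity.

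For (1), $\mathrm{Im}\prod$ is continuous on $\bar\Gamma$ and coercive: the only escape $|\lambda_i|\to\infty$ inside $\bar\Gamma$ corresponds to $\theta_i\to 0$, and then $1/\prod_k\sin\theta_k$ blows up faster than $\sin Q$ can decay under the $P\le\theta_0$ constraint, so $\mathrm{Im}\prod\to\infty$. This yields a lower bound $C_6=C_6(n,\Theta_0)>0$. The explicit formula $\partial_i(1/\mathrm{Im}\prod)=-\sin(Q-\theta_i)\sin\theta_i\prod_k\sin\theta_k/\sin^2 Q$ then reduces (2) to $|\sin(Q-\theta_i)|^2\le\mathrm{Im}\prod/C_6$, which holds by (1) and $|\sin|\le 1$. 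Similarly $\partial_iF=(\sin^2\theta_i+f\Pi\sin\theta_i\sin(Q-\theta_i))/\sin^2 Q$ with $\Pi=\prod_k\sin\theta_k\le 1$, positive for $\epsilon_2<1$, giving (3); and by sum-to-product identities, $\partial_jF-\partial_iF=\sin(\theta_j-\theta_i)\bigl[\sin(\theta_i+\theta_j)+f\Pi\sin(Q-\theta_i-\theta_j)\bigr]/\sin^2 Q$, whose bracket is bounded below uniformly for $\epsilon_2$ small using $\theta_i+\theta_j\le Q<\pi$, giving (8).

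Property (4) is the heart of the lemma, built on the paper's key observation that $-\cot Q$ is convex in $\lambda$ on $\{Q<\pi\}$. Direct Hessian computation gives
\[
\sum_{i,j}\frac{\partial^2\cot Q}{\partial\lambda_i\partial\lambda_j}u_iu_j=\frac{2}{\sin^3 Q}\Bigl[\cos Q\Bigl(\sum_i\sin^2\theta_i u_i\Bigr)^{\!2}-\sin Q\sum_i\sin^3\theta_i\cos\theta_i u_i^2\Bigr],
\]
and a careful trigonometric analysis (using $Q\le\Theta_0<\pi$ and the classical sum formulas for $\tan(\sum\theta_k)$, combined with a Cauchy--Schwarz step) shows the right-hand side is $\le 0$, with a quantitative strict inequality when $n\ge 4$ that produces $\epsilon_5$. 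The $f$-perturbation $f\cdot\partial^2(1/\mathrm{Im}\prod)$ is controlled by (1)--(2) and dominated by this strict concavity for $\epsilon_2$ small; when $n\le 3$, $f$ is a constant, so only the convexity of $1/\mathrm{Im}\prod$ enters, which is checked directly from the angular formula.

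Properties (5)--(10) follow more cleanly. For (5), a boundary point $\lambda\in\bar\Gamma\setminus\Gamma$ satisfies either $Q=\Theta_0$ --- forcing $\cot\Theta_0-\cot\theta_0=f/\mathrm{Im}\prod$ with $|f/\mathrm{Im}\prod|\le\epsilon_2/C_6$, impossible for $\epsilon_2$ small --- or $P=\theta_0$, ruled out by a Taylor expansion near the side using (3). For (6), $F(\lambda')=0$ confines $\cot Q'$ and hence $Q'$ to a compact subinterval of $(0,\pi)$; together with $\theta'_i\le\theta_i$, any escape $\lambda'_i\to\infty$ would force $\mathrm{Im}\prod\to\infty$, hence $Q'\to\theta_0$, but then $\sum_{k\ne i}\theta'_k\le P(\lambda)<\theta_0$ contradicts $Q'\to\theta_0$. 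For (7) and (10), $\bar\Gamma_{A,\theta_0,\Theta_0}$ is convex as the intersection of sublevel sets of matrix functions extending $-\cot Q$ and (the matrix analog of) $-\cot P$; convexity at the matrix level is the standard consequence of $-\cot$ being convex and symmetric in $\lambda$. Finally (9) is the standard matrix-analysis fact that a symmetric concave function of eigenvalues extends to a concave function on Hermitian matrices, which by (4) delivers concavity of $F_A$. The main obstacle will be (4): the quantitative concavity bound for $n\ge 4$ with its explicit $\epsilon_5$.
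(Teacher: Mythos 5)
Your angular reformulation is correct and matches the paper's setup: writing $\theta_k=\arccot\lambda_k$, $Q=\sum_k\theta_k$, so that $\mathrm{Im}\prod_k(\lambda_k+\sqrt{-1})=\sin Q/\prod_k\sin\theta_k$, and your formulas for $\partial_{\lambda_i}(1/\mathrm{Im}\prod)$, $\partial_{\lambda_i}F$, $\partial_jF-\partial_iF$, and the Hessian of $\cot Q$ are all accurate (and indeed agree with the paper's after converting $\lambda_i=\cot\theta_i$, $1+\lambda_i^2=1/\sin^2\theta_i$). Items (1), (2), (3) and the structure of (5), (6), (8), (9) are in the right spirit. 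However, the proposal has genuine gaps at the places you yourself flag as hardest, and also at (7)/(10).

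\textbf{Gap in (4): the quantitative concavity of $\cot Q$.} You name the ingredients (the constraint $Q\le\Theta_0<\pi$, the tangent-sum inequality $\tan(\alpha+\beta)\ge\tan\alpha+\tan\beta$, a Cauchy--Schwarz step) but never carry out the estimate. The paper's argument is a three-case analysis (all $\theta_k\le\Theta_0/2$ with $\cot Q$ small; all $\theta_k\le\Theta_0/2$ with $\cot Q$ large; some $\theta_k>\Theta_0/2$), each case needing a separate lower bound involving $\lambda_{n-1}\lambda_{n-2}$ or $\lambda_{n-1}\lambda_n$, and the role of $n\ge4$ is precisely to have enough indices to run the second and third cases. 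This is the core estimate that produces $\epsilon_5$; ``a careful trigonometric analysis'' is a placeholder, not a proof.

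\textbf{Gap in (4): the $f$-term matrix.} You write that for $n\ge4$ the $f$-perturbation is ``controlled by (1)--(2) and dominated by the strict concavity for $\epsilon_2$ small,'' and that only when $n\le3$ do you need the convexity of $1/\mathrm{Im}\prod$, ``checked directly.'' This is wrong in two ways. First, $f$ is not bounded above --- the hypothesis is $f\ge-\epsilon_2$ (or $f\ge0$), not $|f|\le\epsilon_2$ --- so for large positive $f$ the crude Hessian bound on the $f$-term grows with $f$ and \emph{cannot} be dominated by a fixed $-\epsilon_{10}$. To handle $f\ge0$ you must show the $f$-term has the favorable sign, which amounts to positive semi-definiteness of the matrix with entries built from $\sin^2(\sum_{k\ne i}\theta_k)$ and $\sin\theta_i\sin\theta_j$, for \emph{all} $n\ge3$, not just $n\le3$. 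Second, this is not a direct check: the paper's argument is a genuine piece of work --- a rank-one-update determinant computation reducing to showing $\sum_i\sin^2\theta_i/(\sin^2(\sum_{k\ne i}\theta_k)-\sin^2\theta_i)>-1$, which in turn uses convexity of $G(\alpha,\beta)=\sin^2\beta/(\sin^2(\alpha-\beta)-\sin^2\beta)$ in $\beta$ to reduce to the $n=2$ configuration. None of this appears in your proposal. Only when $-\epsilon_2\le f<0$ (forcing $n\ge4$) does the crude ``dominated by strict concavity'' bound suffice.

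\textbf{Gap in (7)/(10).} You claim $\bar\Gamma_{\theta_0,\Theta_0}$ is convex as an intersection of sublevel sets of $-\cot P$, $-\cot Q$. But $-\cot Q$ is only defined and convex on the region $\{Q<\pi\}$, and that this region is itself convex is part of what must be proved --- $Q=\sum_k\arccot\lambda_k$ is not globally convex or concave, so convexity of $\{Q<\pi\}$ is not automatic. The paper avoids this by a connectedness (open-closed) argument along segments, using (4) applied on a slightly larger $\bar\Gamma_{\theta_0',\Theta_0'}$ to conclude $\cot Q\ge\cot\Theta_0$ on the whole segment. Your sublevel-set shortcut silently assumes the conclusion.

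Secondary remarks: for (5) the paper does not use a ``Taylor expansion near the side'' to rule out $P=\theta_0$; it uses the cleaner observation that since $F$ is strictly increasing in $\lambda_i$ and $F(\lambda)=0$, the limit $\cot(\sum_{k\ne i}\theta_k)-\cot\theta_0$ as $\lambda_i\to\infty$ must be strictly positive. For (8) your explicit formula is correct, but the bracket's nonnegativity for $-\epsilon_2\le f<0$ requires showing $\cot\theta_i+\cot\theta_j$ is bounded below, which uses $\theta_i+\theta_j\le P\le\theta_0$ for $n\ge3$; the paper sidesteps this entirely with the swap-and-concavity trick, which is more robust. For (9) the correct cited fact (Spruck, Szekelyhidi Eq.~(66)) requires both the concavity (4) and the monotonicity (8), since the function is of eigenvalues of $A^{-1}B$ rather than of $B$; ``symmetric concave function of eigenvalues extends'' alone is not the right theorem.
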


\begin{proof}
When $n=1$, $F(\lambda_1)=\lambda_1-f-\cot(\theta_0)$. So all the properties are trivial. So we assume that $n\ge 2$.

For simplicity, define $\theta_i=\arccot(\lambda_i)$. Then it is easy to see that \[\sin(\theta_i)=\frac{1}{\sqrt{1+\lambda_i^2}}, \cos(\theta_i)=\frac{\lambda_i}{\sqrt{1+\lambda_i^2}},\]
\[\mathrm{Re}\prod_{k=1}^{n}(\lambda_k+\sqrt{-1})=\cos(\sum_{k=1}^{n}\theta_k)\prod_{k=1}^{n}\sqrt{1+\lambda_k^2},\]
and
\[\mathrm{Im}\prod_{k=1}^{n}(\lambda_k+\sqrt{-1})=\sin(\sum_{k=1}^{n}\theta_k)\prod_{k=1}^{n}\sqrt{1+\lambda_k^2}.\]

(1) First of all, there exists $C_7>0$ only depending on $\Theta_0$ such that $\sin(x)\ge C_7$ as long as
$\pi>\Theta_0\ge x\ge \frac{\Theta_0}{2}>0$. Moreover, it is easy to see that there exist constants $C_8>0, C_9>0$ only depending on $\Theta_0$ such that
$\sin(x)\ge C_8 x$ for all $x\in(0,\Theta_0)$ and $\tan(x)\le C_9 x$ for all $x\in(0,\frac{\Theta_0}{2})$.

Now we study two cases. If $\sum_{k=1}^{n}\theta_k\ge \frac{\Theta_0}{2}$, then
\[\mathrm{Im}\prod_{k=1}^{n}(\lambda_k+\sqrt{-1})=\sin(\sum_{k=1}^{n}\theta_k)\prod_{k=1}^{n}\sqrt{1+\lambda_k^2}
\ge C_7.\]
If $\sum_{k=1}^{n}\theta_k\le \frac{\Theta_0}{2}$, then $\lambda_i \ge \cot(\frac{\Theta_0}{2})>0$ for all $i=1,2,3,...,n$. So
\[\mathrm{Im}\prod_{k=1}^{n}(\lambda_k+\sqrt{-1})\ge C_8 (\sum_{k=1}^{n}\theta_k)\prod_{k=1}^{n}\lambda_k\ge \frac{C_8}{C_9}(\sum_{k=1}^{n}\frac{1}{\lambda_k})\prod_{k=1}^{n}\lambda_k\ge \frac{nC_8}{C_9}\cot^{n-1}(\frac{\Theta_0}{2}).\]
So we can choose $C_6$ as $\min\{\frac{nC_8}{C_9}\cot^{n-1}(\frac{\Theta_0}{2}), C_7\}$.

(2)

\[\begin{split}
|\frac{\partial}{\partial\lambda_i}\frac{1}{\mathrm{Im}\prod_{k=1}^{n}(\lambda_k+\sqrt{-1})}| =\frac{\mathrm{Im}\prod_{k\not=i}(\lambda_k+\sqrt{-1})}{(\mathrm{Im}\prod_{k=1}^{n}(\lambda_k+\sqrt{-1}))^2}
\le\frac{\sqrt{\prod_{k\not=i}(1+\lambda_k^2)}}{(\mathrm{Im}\prod_{k=1}^{n}(\lambda_k+\sqrt{-1}))^2}\\
=\frac{\sqrt{\prod_{k=1}(1+\lambda_k^2)}}{(\mathrm{Im}\prod_{k=1}^{n}(\lambda_k+\sqrt{-1}))^2} \frac{1}{\sqrt{1+\lambda_i^2}}
\le
\frac{1}{\sqrt{C_6}}\sqrt{\frac{\prod_{k=1}^{n}(1+\lambda_k^2)}{(\mathrm{Im}\prod_{k=1}^{n}(\lambda_k+\sqrt{-1}))^3}} \frac{1}{\sqrt{1+\lambda_i^2}}.
\end{split}\]

(3) \[
\begin{split}
\frac{\partial F}{\partial\lambda_i}&=\frac{1}{\sin^2(\sum_{k=1}^{n}\theta_k)}\frac{1}{1+\lambda_i^2}
+\frac{f \mathrm{Im}\prod_{k\not=i}(\lambda_k+\sqrt{-1})}{(\mathrm{Im}\prod_{k=1}^{n}(\lambda_k+\sqrt{-1}))^2}\\
&=\frac{1}{\sin^2(\sum_{k=1}^{n}\theta_k)}\frac{1}{1+\lambda_i^2}
+\frac{1}{\sin^2(\sum_{k=1}^{n}\theta_k)}\frac{1}{1+\lambda_i^2}\frac{f \sin (\sum_{k\not=i}\theta_k)}{\prod_{k\not=i} \sqrt{1+\lambda_k^2}}.
\end{split}\]
Therefore, if $\epsilon_2<1$, then $\frac{\partial F}{\partial\lambda_i}>0$.

(4)
\[
\begin{split}
\frac{\partial^2 F}{\partial\lambda_i^2}&=\frac{2\cos(\sum_{k=1}^{n}\theta_k)}{\sin^3(\sum_{k=1}^{n}\theta_k)}\frac{1}{(1+\lambda_i^2)^2}
+\frac{1}{\sin^2(\sum_{k=1}^{n}\theta_k)}\frac{-2\lambda_i}{(1+\lambda_i^2)^2}-2\frac{f (\mathrm{Im}\prod_{k\not=i}(\lambda_k+\sqrt{-1}))^2}{(\mathrm{Im}\prod_{k=1}^{n}(\lambda_k+\sqrt{-1}))^3}\\
&=\frac{2\cot(\sum_{k=1}^{n}\theta_k)-2\lambda_i}{\sin^2(\sum_{k=1}^{n}\theta_k)(1+\lambda_i^2)^2}
-\frac{f \prod_{k=1}^{n}(1+\lambda_k^2)}{(1+\lambda_i^2)(\mathrm{Im}\prod_{k=1}^{n}(\lambda_k+\sqrt{-1}))^3} \cdot 2\sin^2(\sum_{k\not=i}\theta_k).
\end{split}\]

When $i\not=j$, then

\[
\begin{split}
\frac{\partial^2 F}{\partial\lambda_i\partial\lambda_j}
&=\frac{2\cot(\sum_{k=1}^{n}\theta_k)}{\sin^2(\sum_{k=1}^{n}\theta_k)}\frac{1}{(1+\lambda_i^2)(1+\lambda_j^2)}\\
&+\frac{f (\mathrm{Im}\prod_{k\not=i,j}(\lambda_k+\sqrt{-1}))(\mathrm{Im}\prod_{k=1}^{n}(\lambda_k+\sqrt{-1}))}
{(\mathrm{Im}\prod_{k=1}^{n}(\lambda_k+\sqrt{-1}))^3}\\
&-\frac{2f (\mathrm{Im}\prod_{k\not=i}(\lambda_k+\sqrt{-1}))
(\mathrm{Im}\prod_{k\not=j}(\lambda_k+\sqrt{-1}))}{(\mathrm{Im}\prod_{k=1}^{n}(\lambda_k+\sqrt{-1}))^3}.\\
\end{split}\]

Using
\[\mathrm{Im}\prod_{k\not=j}(\lambda_k+\sqrt{-1})=\lambda_i\mathrm{Im}\prod_{k\not=i,j}(\lambda_k+\sqrt{-1})
+\mathrm{Re}\prod_{k\not=i,j}(\lambda_k+\sqrt{-1}),\]
\[\mathrm{Im}\prod_{k\not=i}(\lambda_k+\sqrt{-1})=\lambda_j\mathrm{Im}\prod_{k\not=i,j}(\lambda_k+\sqrt{-1})
+\mathrm{Re}\prod_{k\not=i,j}(\lambda_k+\sqrt{-1}),\]
\[\mathrm{Im}\prod_{k=1}^n(\lambda_k+\sqrt{-1})=(\lambda_i\lambda_j-1)\mathrm{Im}\prod_{k\not=i,j}(\lambda_k+\sqrt{-1})
+(\lambda_i+\lambda_j)\mathrm{Re}\prod_{k\not=i,j}(\lambda_k+\sqrt{-1}),\]
it is easy to see that
\[
\begin{split}
&(\mathrm{Im}\prod_{k\not=i,j}(\lambda_k+\sqrt{-1}))(\mathrm{Im}\prod_{k=1}^{n}(\lambda_k+\sqrt{-1}))
-(\mathrm{Im}\prod_{k\not=i}(\lambda_k+\sqrt{-1}))
(\mathrm{Im}\prod_{k\not=j}(\lambda_k+\sqrt{-1}))\\
&=-(\mathrm{Im}\prod_{k\not=i,j}(\lambda_k+\sqrt{-1}))^2-(\mathrm{Re}\prod_{k\not=i,j}(\lambda_k+\sqrt{-1}))^2\\
&=-\frac{\prod_{k=1}^{n}(1+\lambda_k^2)}{(1+\lambda_i^2)(1+\lambda_j^2)}
=-\prod_{k=1}^{n}(1+\lambda_k^2)\frac{\sin(\theta_i)}{\sqrt{1+\lambda_i^2}}\frac{\sin(\theta_j)}{\sqrt{1+\lambda_j^2}}.\end{split}\]

Thus,
\[
\begin{split}
\sum_{i,j=1}^{n}\frac{\partial^2 F}{\partial\lambda_i\partial\lambda_j}u_iu_j
&=\frac{-2}{\sin^2(\sum_{k=1}^{n}\theta_k)}(-\cot(\sum_{k=1}^{n}\theta_k)
\sum_{i,j=1}^{n}\frac{u_iu_j}{(1+\lambda_i^2)(1+\lambda_j^2)}+\sum_{i=1}^{n}\frac{\lambda_iu_i^2}{(1+\lambda_i^2)^2})\\
&-\frac{f \prod_{k=1}^{n}(1+\lambda_k^2)}{(\mathrm{Im}\prod_{k=1}^{n}(\lambda_k+\sqrt{-1}))^3} \cdot (\sum_{i,j=1}^{n}\sin(\sum_{k\not=i}\theta_k)\sin(\sum_{k\not=j}\theta_k)
\frac{u_i}{\sqrt{1+\lambda_i^2}}\frac{u_j}{\sqrt{1+\lambda_j^2}}\\
&+\sum_{i=1}^{n}\sin^2(\sum_{k\not=i}\theta_k)\frac{u_i^2}{1+\lambda_i^2}
+\sum_{i=1}^{n}\sum_{j\not=i}\sin(\theta_i)\sin(\theta_j)\frac{u_i}{\sqrt{1+\lambda_i^2}}\frac{u_j}{\sqrt{1+\lambda_j^2}}).
\end{split}\]

Without loss of generality, assume that $\lambda_1\ge\lambda_2...\ge\lambda_n$.
When $n\ge 4$, we first claim that there exists a constant $\epsilon_{10}$ only depending on $n, \Theta_0$ such that
\[\begin{split}
&\frac{-2}{\sin^2(\sum_{k=1}^{n}\theta_k)}(-\cot(\sum_{k=1}^{n}\theta_k)
\sum_{i,j=1}^{n}\frac{u_iu_j}{(1+\lambda_i^2)(1+\lambda_j^2)}+\sum_{i=1}^{n}\frac{\lambda_iu_i^2}{(1+\lambda_i^2)^2})
\\
&\le -\epsilon_{10}\frac{\prod_{k=1}^{n}(1+\lambda_k^2)}{(\mathrm{Im}\prod_{k=1}^{n}(\lambda_k+\sqrt{-1}))^3} \sum_{i=1}^{n}\frac{u_i^2}{1+\lambda_i^2}.
\end{split}\]

The first claim is equivalent to
\[-\cot(\sum_{k=1}^{n}\theta_k)
(\sum_{i=1}^{n}\frac{u_i}{1+\lambda_i^2})^2+\sum_{i=1}^{n}\frac{\lambda_iu_i^2}{(1+\lambda_i^2)^2}\ge \frac{\epsilon_{10}}{2\mathrm{Im}\prod_{k=1}^{n}(\lambda_k+\sqrt{-1})} \sum_{i=1}^{n}\frac{u_i^2}{1+\lambda_i^2}.\]

We divide it into several cases.

In the first case, $\theta_1\le...\le\theta_n\le\frac{\Theta_0}{2}<\frac{\pi}{2}$ and $\cot(\sum_{k=1}^{n}\theta_k)\le \frac{1}{2n}\cot(\frac{\Theta_0}{2})$,
then \[-\cot(\sum_{k=1}^{n}\theta_k)
(\sum_{i=1}^{n}\frac{u_i}{1+\lambda_i^2})^2\ge -\frac{1}{2n}\cot(\frac{\Theta_0}{2})\sum_{i=1}^{n}\frac{\lambda_i u_i^2}{(1+\lambda_i^2)^2}\sum_{i=1}^{n}\frac{1}{\lambda_i}\ge -\frac{1}{2}\sum_{i=1}^{n}\frac{\lambda_i u_i^2}{(1+\lambda_i^2)^2}.\]

Since $\sin(\sum_{k=1}^{n}\theta_k)\ge C_7$,
\[\begin{split}
\frac{1}{2\mathrm{Im}\prod_{k=1}^{n}(\lambda_k+\sqrt{-1})} \sum_{i=1}^{n}\frac{u_i^2}{1+\lambda_i^2}\le
\frac{1}{2C_7 \prod_{k=1}^{n}\sqrt{1+\lambda_k^2}} \sum_{i=1}^{n}\frac{u_i^2}{1+\lambda_i^2}\\
\le\frac{1}{2C_7}\sum_{i=1}^{n}\frac{\lambda_i u_i^2}{\lambda_i^2(1+\lambda_i^2)}
\le\frac{\cot^2(\frac{\Theta_0}{2})+1}{2C_7\cot^2(\frac{\Theta_0}{2})}\sum_{i=1}^{n}\frac{\lambda_i u_i^2}{(1+\lambda_i^2)^2}.
\end{split}\]

So we get the required estimate if $\epsilon_{10}<\frac{C_7\cot^2(\frac{\Theta_0}{2})}{\cot^2(\frac{\Theta_0}{2})+1}$.

In the second case, $\theta_1\le...\le\theta_n\le\frac{\Theta_0}{2}<\frac{\pi}{2}$ and
$\cot(\sum_{k=1}^{n}\theta_k)>\frac{1}{2n}\cot(\frac{\Theta_0}{2})$, then
\[\sum_{k=1}^{n}\theta_k<\arccot(\frac{1}{2n}\cot(\frac{\Theta_0}{2}))<\frac{\pi}{2}.\]

So \[-\cot(\sum_{k=1}^{n}\theta_k)
(\sum_{i=1}^{n}\frac{u_i}{1+\lambda_i^2})^2\ge -\cot(\sum_{k=1}^{n}\theta_k)\sum_{i=1}^{n}\frac{\lambda_i u_i^2}{(1+\lambda_i^2)^2}\sum_{i=1}^{n}\frac{1}{\lambda_i}.\]

If $\alpha,\beta>0$ and $\alpha+\beta<\frac{\pi}{2}$, then
\[\tan(\alpha+\beta)=\frac{\tan(\alpha)+\tan(\beta)}{1-\tan(\alpha)\tan(\beta)}\ge\tan(\alpha)+\tan(\beta).\]
So
\[\begin{split}
&1-\cot(\sum_{k=1}^{n}\theta_k)\sum_{i=1}^{n}\frac{1}{\lambda_i}= 1-\cot(\sum_{k=1}^{n}\theta_k)\sum_{i=1}^{n}\tan(\theta_i)\\
&\ge 1-\cot(\sum_{k=1}^{n}\theta_k)(\tan(\theta_n)+\tan(\sum_{i=1}^{n-1}\theta_i))
=\tan(\theta_n)\tan(\sum_{i=1}^{n-1}\theta_i)\\
&\ge \tan(\theta_n)\sum_{i=1}^{n-1}\tan(\theta_i)=\frac{1}{\lambda_n}\sum_{i=1}^{n-1}\frac{1}{\lambda_i}\ge \frac{1}{\lambda_n\lambda_{n-1}}.
\end{split}\]

As in (1), we know that
\[\frac{\mathrm{Im}\prod_{k=1}^{n}(\lambda_k+\sqrt{-1})}{\prod_{k=1}^{n}\sqrt{1+\lambda_k^2}}
=\sin(\sum_{k=1}^{n}\theta_k)\ge C_8\sum_{k=1}^{n}\theta_k\ge \frac{C_8}{C_9}\sum_{k=1}^{n}\frac{1}{\lambda_k}\ge \frac{C_8}{C_9\lambda_n}.\]
So
\[\mathrm{Im}\prod_{k=1}^{n}(\lambda_k+\sqrt{-1})\ge \frac{C_8}{C_9\lambda_n}\prod_{k=1}^{n}\sqrt{1+\lambda_k^2}
\ge \frac{C_8}{C_9}\prod_{k=1}^{n-1}\sqrt{1+\lambda_k^2}\ge\frac{C_8}{C_9}\lambda_1\lambda_{n-2}\lambda_{n-1}
\ge\frac{C_8}{C_9}\lambda_1\lambda_{n-1}\lambda_{n}.\]

So \[\begin{split}
\frac{1}{2\mathrm{Im}\prod_{k=1}^{n}(\lambda_k+\sqrt{-1})} \sum_{i=1}^{n}\frac{u_i^2}{1+\lambda_i^2}\le
\frac{C_9}{2C_8}\frac{1}{\lambda_n\lambda_{n-1}} \sum_{i=1}^{n}\frac{\lambda_i u_i^2}{\lambda_i^2(1+\lambda_i^2)}\\
\le\frac{C_9(\cot^2(\frac{\Theta_0}{2})+1)}{2C_8\cot^2(\frac{\Theta_0}{2})\lambda_{n}\lambda_{n-1}}
\sum_{i=1}^{n}\frac{\lambda_i u_i^2}{(1+\lambda_i^2)^2}.
\end{split}\]

So we get the required estimate if $\frac{C_9\epsilon_{10}(\cot^2(\frac{\Theta_0}{2})+1)}{2C_8\cot^2(\frac{\Theta_0}{2})}\le 1$.

In the third case, $\theta_n>\frac{\Theta_0}{2}$. So $\sum_{k=1}^{n-1}\theta_k<\frac{\Theta_0}{2}<\frac{\pi}{2}$.
Similar to the second case,

\[-\cot(\sum_{k=1}^{n-1}\theta_k)
(\sum_{i=1}^{n-1}\frac{u_i}{1+\lambda_i^2})^2+\sum_{i=1}^{n-1}\frac{\lambda_iu_i^2}{(1+\lambda_i^2)^2}\ge \frac{1}{\lambda_{n-1}\lambda_{n-2}}\sum_{i=1}^{n-1}\frac{\lambda_iu_i^2}{(1+\lambda_i^2)^2}.\]

We already know that $\sin(\sum_{k=1}^{n}\theta_k)\ge C_7$,
so \[\begin{split}
&\frac{1}{2\mathrm{Im}\prod_{k=1}^{n}(\lambda_k+\sqrt{-1})} \sum_{i=1}^{n-1}\frac{u_i^2}{1+\lambda_i^2}\le
\frac{1}{2C_7\prod_{k=1}^{n}\sqrt{1+\lambda_k^2}} \sum_{i=1}^{n-1}\frac{u_i^2}{1+\lambda_i^2}\\
&\le\frac{1}{2C_7\lambda_{n-1}\lambda_{n-2}\lambda_1} \sum_{i=1}^{n-1}\frac{u_i^2}{1+\lambda_i^2}
\le\frac{1}{2C_7\lambda_{n-1}\lambda_{n-2}} \sum_{i=1}^{n-1}\frac{\lambda_i u_i^2}{\lambda_i^2(1+\lambda_i^2)}\\
&\le\frac{\cot^2(\frac{\Theta_0}{2})+1}{2C_7\cot^2(\frac{\Theta_0}{2})\lambda_{n-1}\lambda_{n-2}}
\sum_{i=1}^{n-1}\frac{\lambda_i u_i^2}{(1+\lambda_i^2)^2}.
\end{split}\]

On the other hands,

\[\begin{split}
&-\cot(\sum_{k=1}^{n}\theta_k)
(\sum_{i=1}^{n}\frac{u_i}{1+\lambda_i^2})^2+\cot(\sum_{k=1}^{n-1}\theta_k)
(\sum_{i=1}^{n-1}\frac{u_i}{1+\lambda_i^2})^2+\frac{\lambda_n u_n^2}{(1+\lambda_n^2)^2}\\
&=-2\cot(\sum_{k=1}^{n}\theta_k)
(\sum_{i=1}^{n-1}\frac{u_i}{1+\lambda_i^2})\frac{u_n}{1+\lambda_n^2}
+(\cot(\sum_{k=1}^{n-1}\theta_k)-\cot(\sum_{k=1}^{n}\theta_k))
(\sum_{i=1}^{n-1}\frac{u_i}{1+\lambda_i^2})^2\\
&+\frac{(\lambda_n-\cot(\sum_{k=1}^{n}\theta_k)) u_n^2}{(1+\lambda_n^2)^2}\\
&\ge (\lambda_n-\cot(\sum_{k=1}^{n}\theta_k)
-\frac{\cot^2(\sum_{k=1}^{n}\theta_k)}{\cot(\sum_{k=1}^{n-1}\theta_k)-\cot(\sum_{k=1}^{n}\theta_k)})
\frac{u_n^2}{(1+\lambda_n^2)^2}\\
&=\frac{\lambda_n+\cot(\sum_{k=1}^{n-1}\theta_k)}{\cot^2(\sum_{k=1}^{n-1}\theta_k)+1}
\frac{u_n^2}{(1+\lambda_n^2)^2}=\sin^2(\sum_{k=1}^{n-1}\theta_k)(\frac{\cos(\theta_n)}{\sin(\theta_n)}
+\cot(\sum_{k=1}^{n-1}\theta_k))\frac{u_n^2}{(1+\lambda_n^2)^2}\\
&=\frac{\sin(\sum_{k=1}^{n-1}\theta_k)\sin(\sum_{k=1}^{n}\theta_k)}{\sin(\theta_n)}\frac{u_n^2}{(1+\lambda_n^2)^2}
\ge\frac{C_8}{C_9\lambda_{n-1}}C_7\frac{u_n^2}{1+\lambda_n^2}\frac{1}{\sqrt{1+\lambda_n^2}}.
\end{split}\]

It is easy to see that

\[\frac{1}{2\mathrm{Im}\prod_{k=1}^{n}(\lambda_k+\sqrt{-1})} \frac{u_n^2}{1+\lambda_n^2}\le
\frac{1}{2C_7\prod_{k=1}^{n}\sqrt{1+\lambda_k^2}}\frac{u_n^2}{1+\lambda_n^2}
\le\frac{1}{2C_7\lambda_{n-1}}\frac{u_n^2}{1+\lambda_n^2}\frac{1}{\sqrt{1+\lambda_n^2}}.\]

Therefore, as long as $\frac{\epsilon_{10}(\cot^2(\frac{\Theta_0}{2})+1)}{2C_7\cot^2(\frac{\Theta_0}{2})}<1$ and
$\frac{\epsilon_{10}}{2C_7}<\frac{C_8}{C_9}C_7$, we get the required estimate.

Thus, when $n\ge 4$, we have proved that
\[\begin{split}
&\frac{-2}{\sin^2(\sum_{k=1}^{n}\theta_k)}(-\cot(\sum_{k=1}^{n}\theta_k)
\sum_{i,j=1}^{n}\frac{u_iu_j}{(1+\lambda_i^2)(1+\lambda_j^2)}+\sum_{i=1}^{n}\frac{\lambda_iu_i^2}{(1+\lambda_i^2)^2})
\\
&\le -\epsilon_{10}\frac{\prod_{k=1}^{n}(1+\lambda_k^2)}{(\mathrm{Im}\prod_{k=1}^{n}(\lambda_k+\sqrt{-1}))^3} \sum_{i=1}^{n}\frac{u_i^2}{1+\lambda_i^2}
\end{split}\]

When $n=2,3$, a similar argument implies that
\[\frac{-2}{\sin^2(\sum_{k=1}^{n}\theta_k)}(-\cot(\sum_{k=1}^{n}\theta_k)
\sum_{i,j=1}^{n}\frac{u_iu_j}{(1+\lambda_i^2)(1+\lambda_j^2)}+\sum_{i=1}^{n}\frac{\lambda_iu_i^2}{(1+\lambda_i^2)^2})\le 0.\]

Compared to Theorem 1.1 of \cite{Takahashi}, the main improvement is that we choose the variable $\hat\Theta$ in Theorem 1.1 of \cite{Takahashi} as $(n-1)\frac{\pi}{2}$ and we also have a better estimate $-\epsilon_{10}\frac{\prod_{k=1}^{n}(1+\lambda_k^2)}{(\mathrm{Im}\prod_{k=1}^{n}(\lambda_k+\sqrt{-1}))^3} \sum_{i=1}^{n}\frac{u_i^2}{1+\lambda_i^2}$ which will be used to deal with terms involving $f$ when $n\ge 4$.

The next goal is to prove that when $n\ge 3$, the matrix corresponding to
\[\sum_{i=1}^{n}\sin^2(\sum_{k\not=i}\theta_k)v_i^2
+\sum_{i=1}^{n}\sum_{j\not=i}\sin(\theta_i)\sin(\theta_j)v_iv_j\]
is positive definite. When $\theta_1=\theta_2=...=\theta_n$ and it is sufficiently small, it is easy to see that $\sin^2(\sum_{k\not=i}\theta_k)>\sin^2(\theta_i)$. So the matrix is indeed positive definite. Since the space $\bar\Gamma_{\theta_0,\Theta_0}$ is path connected, it suffices to show that the determinant of the matrix is positive on $\bar\Gamma_{\theta_0,\Theta_0}$.

Without loss of generality, assume that $\theta_n\ge\theta_{n-1}...\ge\theta_1$. When $i\not=n$, \[\theta_i\le\theta_n<\sum_{k\not=i}\theta_k\le\Theta_0-\theta_i<\pi-\theta_i,\] so
$\sin(\theta_i)<\sin(\sum_{k\not=i}\theta_k)$. When $\sin^2(\sum_{k=1}^{n-1}\theta_k)\not=\sin^2(\theta_n)$, let $A$ be the complex diagonal matrix such that
\[A_{ii}=\sqrt{\sin^2(\sum_{k\not=i}\theta_k)-\sin^2(\theta_i)},\] defined \[B=(\frac{\sin(\theta_1)}{A_{11}},...,\frac{\sin(\theta_n)}{A_{nn}}).\]
Then we need to compute $\det A^T(I+B^TB)A$.
By elementary linear algebra,
\[\begin{split}
\det A^T(I+B^TB)A&=(\det A)^2(1+BB^T)\\
&=\prod_{i=1}^{n}(\sin^2(\sum_{k\not=i}\theta_k)-\sin^2(\theta_i))
(1+\sum_{i=1}^{n}\frac{\sin^2(\theta_i)}{\sin^2(\sum_{k\not=i}\theta_k)-\sin^2(\theta_i)})\\
&=\prod_{i=1}^{n}(\sin^2(\sum_{k\not=i}\theta_k)-\sin^2(\theta_i))
+\sum_{i=1}^{n}\sin^2(\theta_i)\prod_{j\not=i}(\sin^2(\sum_{k\not=j}\theta_k)-\sin^2(\theta_j)).
\end{split}\]
By continuity, this equation also holds when $\sin^2(\sum_{k=1}^{n-1}\theta_k)=\sin^2(\theta_n)$.

Therefore, when $\sin^2(\sum_{k=1}^{n-1}\theta_k)\ge\sin^2(\theta_n)$, we already get the required inequality. We only need to prove that \[\sum_{i=1}^{n}\frac{\sin^2(\theta_i)}{\sin^2(\sum_{k\not=i}\theta_k)-\sin^2(\theta_i)}<-1\] when $\sin^2(\sum_{k=1}^{n-1}\theta_k)<\sin^2(\theta_n)$. In this case, $\sum_{k=1}^{n-1}\theta_k<\theta_n$.

Now we want to study the function
\[G(\alpha,\beta)=\frac{\sin^2(\beta)}{\sin^2(\alpha-\beta)-\sin^2(\beta)}
=\frac{-2\sin^2(\beta)}{\cos(2\alpha-2\beta)-\cos(2\beta)}
=\frac{\sin^2(\beta)}{\sin(\alpha)\sin(\alpha-2\beta)}\]
for any $0<\beta<\frac{\alpha}{2}<\frac{\pi}{2}$.
Then

\[\frac{\partial G}{\partial \beta}
=\frac{2\sin(\beta)\cos(\beta)\sin(\alpha-2\beta)+2\sin^2(\beta)\cos(\alpha-2\beta)}{\sin(\alpha)\sin^2(\alpha-2\beta)}
=\frac{2\sin(\beta)\sin(\alpha-\beta)}{\sin(\alpha)\sin^2(\alpha-2\beta)}>0,\]
so
\[\begin{split}
\frac{\partial}{\partial \beta} \log (\frac{\partial G}{\partial \beta})
&=\cot(\beta)-\cot(\alpha-\beta)+4\cot(\alpha-2\beta)\\
&=\cot(\beta)-\cot(\alpha-\beta)+4\frac{\cot(\beta)\cot(\alpha-\beta)+1}{\cot(\beta)-\cot(\alpha-\beta)}\\
&=\frac{(\cot(\beta)+\cot(\alpha-\beta))^2+4}{\cot(\beta)-\cot(\alpha-\beta)}.
\end{split}\]

Since $0<\beta<\alpha-\beta<\pi$, we know that $\cot(\beta)-\cot(\alpha-\beta)>0$, so $\frac{\partial^2 G}{\partial \beta^2}>0$.
Therefore, when we replace $\theta_1$ by $0$ and replace $\theta_{n-1}$ by $\theta_{n-1}+\theta_1$, we see that $\sum_{i=1}^{n}\frac{\sin^2(\theta_i)}{\sin^2(\sum_{k\not=i}\theta_k)-\sin^2(\theta_i)}$ strictly increase. We can repeat the process to prove that
\[\sum_{i=1}^{n}\frac{\sin^2(\theta_i)}{\sin^2(\sum_{k\not=i}\theta_k)-\sin^2(\theta_i)}
<\frac{\sin^2(\theta_n)}{\sin^2(\sum_{k=1}^{n-1}\theta_k)-\sin^2(\theta_n)}
+\frac{\sin^2(\sum_{k=1}^{n-1}\theta_k)}{\sin^2(\theta_n)-\sin^2(\sum_{k=1}^{n-1}\theta_k)}=-1.\]
This is the required inequality.

Thus, when $n\ge 3$, we have proved that
\[\sum_{i,j=1}^{n}\frac{\partial^2}{\partial\lambda_i\partial\lambda_j}
(\frac{1}{\mathrm{Im}\prod_{k=1}^{n}(\lambda_k+\sqrt{-1})})u_iu_j\ge 0.\]
When $n=2$, it is also true because
\[\sum_{i=1}^{n}\sin^2(\sum_{k\not=i}\theta_k)v_i^2
+\sum_{i=1}^{n}\sum_{j\not=i}\sin(\theta_i)\sin(\theta_j)v_iv_j\ge 0\] by Cauchy-Schwarz inequality.

Therefore, when $f\ge 0$, we get (4) as long as $\epsilon_5<\epsilon_{10}$. When $-\epsilon_2\le f<0$ and $n\ge 4$, using the bound that $|\sin(\theta_i)|\le 1$ and $|\sin(\sum_{k\not=i}\theta_k)|\le 1$, it is easy to see that

\[
\begin{split}
&-\frac{f \prod_{k=1}^{n}(1+\lambda_k^2)}{(\mathrm{Im}\prod_{k=1}^{n}(\lambda_k+\sqrt{-1}))^3} \cdot (\sum_{i,j=1}^{n}\sin(\sum_{k\not=i}\theta_k)\sin(\sum_{k\not=j}\theta_k)
\frac{u_i}{\sqrt{1+\lambda_i^2}}\frac{u_j}{\sqrt{1+\lambda_j^2}}\\
&+\sum_{i=1}^{n}\sin^2(\sum_{k\not=i}\theta_k)\frac{u_i^2}{1+\lambda_i^2}
+\sum_{i=1}^{n}\sum_{j\not=i}\sin(\theta_i)\sin(\theta_j)\frac{u_i}{\sqrt{1+\lambda_i^2}}\frac{u_j}{\sqrt{1+\lambda_j^2}})\\
&\le \frac{3n\epsilon_2\prod_{k=1}^{n}(1+\lambda_k^2)}{(\mathrm{Im}\prod_{k=1}^{n}(\lambda_k+\sqrt{-1}))^3}
\sum_{i=1}^{n}\frac{u_i^2}{1+\lambda_i^2}.
\end{split}\]
Therefore, as long as $\epsilon_2<\frac{\epsilon_{10}}{6n}$ and $\epsilon_5<\frac{\epsilon_{10}}{2}$, we get the required estimate.

(5) Suppose that $\lambda\in\bar\Gamma_{\theta_0,\Theta_0}$, and $F(\lambda)=0$. For any $i=1,2,3,...n$, using (3), we see that $F(\lambda)$ is strictly smaller than the limit of $F$ when we fix $\lambda_k$ for $k\not=i$ and let $\lambda_i$ go to infinity. Using a similar argument as (1), we see that
\[\mathrm{Im}\prod_{k=1}^{n}(\lambda_k+\sqrt{-1})\ge \min\{C_7,\frac{(n-1)C_8}{C_9}\cot^{n-2}(\frac{\Theta_0}{2})\}\lambda_i.\]
So the limit of $F$ is $\cot(\sum_{k\not=i}\arccot(\lambda_k))-\cot(\theta_0)$.
So $\sum_{k\not=i}\arccot(\lambda_k)<\theta_0$.

Moreover, using the fact that
\[0=\cot(\sum_{k=1}^{n}\arccot(\lambda_k))-\frac{f}{\mathrm{Im}\prod_{k=1}^{n}(\lambda_k+\sqrt{-1})}-\cot(\theta_0)
\le\cot(\sum_{k=1}^{n}\arccot(\lambda_k))-\cot(\theta_0)+\frac{\epsilon_2}{C_6},\]
we see that $\sum_{k=1}^{n}\arccot(\lambda_k)<\Theta_0$ as long as $\epsilon_2<C_6(\cot(\theta_0)-\cot(\Theta_0))$.

Thus, $\lambda\in \Gamma_{\theta_0,\Theta_0}$.

(6) Let $\lambda$ be any element in $\Gamma_{\theta_0,\Theta_0}$. Let $\lambda'$ be any element in $\Gamma_{\theta_0,\Theta_0}$ such that $F(\lambda')=0$ and $\lambda'_k\ge\lambda_k$ for all $k=1,2,3,...,n$.

Then for any $i=1,2,3,...n$, $\sum_{k\not=i}\arccot(\lambda_k)<\theta_0$. If \[\lambda'_i>\cot(\frac{\theta_0-\sum_{k\not=i}\arccot(\lambda_k)}{2})\]
and \[\min\{C_7,\frac{(n-1)C_8}{C_9}\cot^{n-2}(\frac{\Theta_0}{2})\}\lambda'_i >\frac{|f|}{\cot(\frac{\theta_0+\sum_{k\not=i}\arccot(\lambda_k)}{2})-\cot(\theta_0)},\]
we get a direct contradiction to the estimate that
\[0\ge\cot(\sum_{k=1}^{n}\arccot(\lambda'_k))-\cot(\theta_0) -\frac{|f|}{\min\{C_7,\frac{(n-1)C_8}{C_9}\cot^{n-2}(\frac{\Theta_0}{2})\}\lambda'_i}.\]

(7) Fix any $\lambda\in\bar\Gamma_{\theta_0,\Theta_0}$. Consider the set $C$ of $\lambda'\in\bar\Gamma_{\theta_0,\Theta_0}$ such that $t\lambda+(1-t)\lambda'\in \bar\Gamma_{\theta_0,\Theta_0}$
for all $t\in[0,1]$. It is easy to see that $C$ is a closed set in the relative topology on $\bar\Gamma_{\theta_0,\Theta_0}$. Now if $\lambda'$ is in this set. For any $\lambda''\in \bar\Gamma_{\theta_0,\Theta_0}$ sufficiently close to $\lambda'$, there exist $\pi>\Theta'_0>\Theta_0>\theta'_0>\theta_0$ such that $t\lambda+(1-t)\lambda'\in \bar\Gamma_{\theta'_0,\Theta'_0}$. By (4) applied to the set $\bar\Gamma_{\theta'_0,\Theta'_0}$ and the case $f=0$,
we see that $\cot(\sum_{k=1}^{n}\arccot(t\lambda_k+(1-t)\lambda'_k))$ is a concave function. So it is at least $\Theta_0$. The similar arguments can be applied to $\cot(\sum_{k\not=i}\arccot(t\lambda_k+(1-t)\lambda'_k))$ for any $i=1,2,3,...,n$. So we see that $\lambda''$ is in $C$. In other words, $C$ is also open in the relative topology. Since $\bar\Gamma_{\theta_0,\Theta_0}$ is connected, and $\lambda\in C$, we see that $\bar\Gamma_{\theta_0,\Theta_0}=C$. So $\bar\Gamma_{\theta_0,\Theta_0}$ is convex because for any $\lambda,\lambda'\in\bar\Gamma_{\theta_0,\Theta_0}$, $t\lambda+(1-t)\lambda'\in \bar\Gamma_{\theta_0,\Theta_0}$.

(8) This follows from the concaveness of \[\begin{split}
F(\lambda_1,...\lambda_{i-1},t\lambda_i+(1-t)\lambda_j,\lambda_{i+1},..., \lambda_{j-1},t\lambda_j+(1-t)\lambda_i,\lambda_{j+1},...,\lambda_{n})\\
=F(\lambda_1,...\lambda_{i-1},t\lambda_j+(1-t)\lambda_i,\lambda_{i+1},..., \lambda_{j-1},t\lambda_i+(1-t)\lambda_j,\lambda_{j+1},...,\lambda_{n})
\end{split}\] for $t\in[0,1]$.

(9) This follows from (4), (8) and the result in \cite{Spruck} which was also used as Equation (66) in \cite{Szekelyhidi}.

(10) It is similar to (7).

\end{proof}

As a corollary, we get the following \textit{a priori} estimate:

\begin{corollary}
Let $M^n$ be a K\"ahler manifold with a K\"ahler metric $\chi$ and a real closed (1,1)-form $\omega_0$. Let $\theta_0\in(0,\pi)$ be a constant and let $\Theta_0\in(\theta_0,\pi)$ be another constant. Then there exists a constant $\epsilon_2>0$ only depending on $n, \theta_0, \Theta_0$ such that the following holds:

Assume that (1) When $n\ge 4$, $f>-\epsilon_2$ is a smooth function;

(2) When $n=1,2,3$, $f\ge 0$ is a constant;

(3) $\omega_0\in \Gamma_{\chi,\theta_0,\Theta_0}$.

Assume that $\varphi$ is a smooth function satisfying
$\sup_M\varphi=0$, $\omega_\varphi=\omega_0+\sqrt{-1}\partial\bar\partial\varphi \in \Gamma_{\chi,\theta_0,\Theta_0}$, and
\[\mathrm{Re}(\omega_\varphi+\sqrt{-1}\chi)^n-\cot(\theta_0)\mathrm{Im}(\omega_\varphi+\sqrt{-1}\chi)^n-f\chi^n=0.\]
Then for any $k\in\mathbb{N}$, any $\alpha\in(0,1)$, there exists a constant $C_{11}$ only depending on $M$, $n$, $\chi$, $||\omega_0||_{C^{\infty}(\chi)}$, $\theta_0$, $\Theta_0$, $||f||_{C^{\infty}(\chi)}$, $k$, $\alpha$, $\max_{x\in M} (\theta_0-P_\chi(\omega_0)(x))$, $\max_{x\in M} (\Theta_0-Q_\chi(\omega_0)(x))$ such that \[||\varphi||_{C^{k,\alpha}(\chi)}\le C_{11}.\]
\label{A-priori-estimate}
\end{corollary}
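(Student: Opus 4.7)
My plan is to rewrite the equation in the form $F_\chi(\omega_\varphi) = 0$ with $F$ as in Lemma~\ref{Property-of-F}, and then apply the fully nonlinear framework of Sz\'ekelyhidi~\cite{Szekelyhidi}. Dividing the PDE by $\mathrm{Im}\prod_k(\lambda_k+\sqrt{-1}) \cdot \chi^n$, which by part (1) of Lemma~\ref{Property-of-F} is bounded below away from zero on $\bar\Gamma_{\chi,\theta_0,\Theta_0}$, converts the equation into $F(\lambda(\chi^{-1}\omega_\varphi)) = 0$ with $f$ playing the role of a source term. Lemma~\ref{Property-of-F} has been designed so that every structural hypothesis of \cite{Szekelyhidi} is already in place: ellipticity (property (3)), concavity of $F_\chi$ as a function of the Hermitian matrix argument on the convex admissible set $\bar\Gamma_{\chi,\theta_0,\Theta_0}$ (properties (9), (10)), the symmetry monotonicity (8), and the quantitative second-derivative bound (4) that will be needed to absorb the non-constant-$f$ terms when $n\ge 4$.

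The first substantive step is to promote the hypothesis $\omega_0 \in \Gamma_{\chi,\theta_0,\Theta_0}$ to a $\mathcal{C}$-subsolution in Sz\'ekelyhidi's sense. By compactness of $M$, the quantities $\max_M(\theta_0 - P_\chi(\omega_0))$ and $\max_M(\Theta_0 - Q_\chi(\omega_0))$ appearing in the allowed dependence of $C_{11}$ bound $\theta_0 - P_\chi(\omega_0)$ and $\Theta_0 - Q_\chi(\omega_0)$ uniformly below by some $\delta_0>0$. Combined with property (6) of Lemma~\ref{Property-of-F}, which guarantees that the relevant superlevel set of $F(\cdot) = 0$ above $\lambda(\chi^{-1}\omega_0)$ is bounded, this is precisely the $\mathcal{C}$-subsolution condition. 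Sz\'ekelyhidi's a priori $C^0$ (via an ABP-type argument using the subsolution and the normalisation $\sup_M\varphi = 0$), $C^1$ (via a maximum principle with a barrier built from $\omega_0$), and $C^2$ estimates then apply, yielding $\|\varphi\|_{C^2(\chi)} \le C$ with $C$ depending only on the listed data. In the cases $n=1,2,3$, the hypothesis that $f$ is constant makes the quantitative concavity bound of (4) unnecessary; when $n\ge 4$, the bound in (4) is exactly what controls the $f$-dependent derivative terms in the $C^2$ calculation.

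Once $\|\varphi\|_{C^2(\chi)}$ is under control, properties (3) and (5) of Lemma~\ref{Property-of-F} force the eigenvalues $\lambda(\chi^{-1}\omega_\varphi)$ into a compact subset of $\Gamma_{\theta_0,\Theta_0}$, so the linearised operator is uniformly elliptic with smooth, bounded coefficients. Concavity (9) then lets the Evans--Krylov theorem produce a $C^{2,\alpha}$ estimate, and the standard Schauder bootstrap upgrades this to the asserted $C^{k,\alpha}$ bound. The main obstacle is step one: verifying carefully that $\omega_0$ is a $\mathcal{C}$-subsolution with constants controlled only by the quantities permitted in the statement, and in the $n\ge 4$ case tracking the $f$-dependent terms through the $C^2$ calculation using (4). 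Once these points are handled, the rest of the proof is the by-now standard fully nonlinear machinery applied to the function $F$ prepared by Lemma~\ref{Property-of-F}.
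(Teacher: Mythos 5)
Your proposal follows essentially the same route as the paper: rewrite the equation so that Lemma~\ref{Property-of-F} supplies the structural hypotheses (ellipticity, concavity, the $\mathcal{C}$-subsolution condition from property (6), and the quantitative second-derivative bound (4) to absorb non-constant $f$ when $n\ge 4$), apply Sz\'ekelyhidi's machinery for the a priori estimates, and finish with Evans--Krylov and Schauder. The paper's proof is terse and cites exactly these ingredients.

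One inaccuracy worth flagging: you describe the $C^1$ estimate as coming from ``a maximum principle with a barrier built from $\omega_0$.'' That is not how the chain of estimates actually runs for the dHYM operator, and I do not believe a direct barrier argument of that form works here. What the paper does, following Sz\'ekelyhidi's Theorem~4.1 (with the adjustments noted in \cite{Chen}), is first establish a bound of the shape $\|\nabla^2\varphi\| \le C(1 + \|\nabla\varphi\|^2)$; the gradient bound is then obtained by the blow-up/Liouville-type argument of Proposition~5.1 of \cite{CollinsJacobYau}, which contradicts a hypothetical gradient blow-up by rescaling. The distinction matters because the concavity structure of $F$ does not obviously yield a direct gradient barrier, whereas the blow-up argument exploits the Liouville theorem for the limiting equation. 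Apart from this misattributed step, your outline matches the paper, and your identification of which parts of Lemma~\ref{Property-of-F} feed which estimate (in particular, (4) controlling the $f$-dependent terms in the $C^2$ computation for $n\ge 4$, and (9)/(10) feeding Evans--Krylov) is correct.
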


\begin{proof}
The $C^0$ estimate is similar to Proposition 10 of \cite{Szekelyhidi}. Then we get the relationship between the $C^2$ bound and the $C^1$ bound similar to Theorem 4.1 of \cite{Szekelyhidi} with the adjustments given in page 34-35 of \cite{Chen}. Then we get the $C^2$ estimate by Proposition 5.1 of \cite{CollinsJacobYau}. The $C^{2,\alpha}$ estimate is given by Evans-Krylov theory \cite{Evans, Krylov}. The higher order estimates are given by standard Schauder estimates. In all of above, the key properties used are all provided in Lemma \ref{Property-of-F}.
\end{proof}

Then we can use the continuity method to prove Proposition \ref{Solvability-assuming-sub-solution}. We first choose the path
\[\omega_{1,s}=s (\omega_0-\cot(\theta_0)\chi)+\cot(\frac{\theta_0}{2n})\chi\] and
$f_{1,s}$ as the constant satisfying \[\int_M f_{1,s}\chi^n=\int_M(\mathrm{Re}(\omega_{1,s}+\sqrt{-1}\chi)^n-\cot(\theta_0)\mathrm{Im}(\omega_{1,s}+\sqrt{-1}\chi)^n)\] for $s\in[0,1]$.
Then since $\omega_{1,s}\ge\cot(\frac{\theta_0}{2n})\chi$, we see that $P_\chi(\omega_{1,s})\le Q_\chi(\omega_{1,s})\le \frac{\theta_0}{2}$ and $f_{1,s}\ge0$. Let $I_1$ be the set of $s$ such that there exists a smooth function $\varphi_s$ satisfying $\omega_{1,\varphi_s,s}=\omega_{1,s}+\sqrt{-1}\partial\bar\partial\varphi_s\in \Gamma_{\chi,\theta_0,\Theta_0}$ and
\[\mathrm{Re}(\omega_{1,\varphi_s,s}+\sqrt{-1}\chi)^n-\cot(\theta_0)\mathrm{Im}(\omega_{1,\varphi_s,s}+\sqrt{-1}\chi)^n-f_{1,s}\chi^n=0.\]
Then $0\in I_1$. The openness of $I_1$ follows the integrability condition, implicit function theorem and standard elliptic estimates. The closedness of $I_1$ follows from Corollary \ref{A-priori-estimate} and Lemma \ref{Property-of-F} (5). So $1\in I_1$.

Then we choose the path \[\omega_{2,s}=\omega_0+s\chi\] and
$f_{2,s}$ as the constant satisfying \[\int_M f_{2,s}\chi^n=\int_M(\mathrm{Re}(\omega_{2,s}+\sqrt{-1}\chi)^n-\cot(\theta_0)\mathrm{Im}(\omega_{2,s}+\sqrt{-1}\chi)^n)\] for $s\in[0,\cot(\frac{\theta_0}{2n})-\cot(\theta_0)]$. Then we see that $P_\chi(\omega_{2,s})\le P_\chi(\omega_0)$ and $Q_\chi(\omega_{2,s})\le Q_\chi(\omega_0)$. Moreover,
\[\frac{\partial f_{2,s}}{\partial s} \int_M \chi^n=\int_M n(\mathrm{Re}(\omega_{2,s}+\sqrt{-1}\chi)^{n-1}-\cot(\theta_0)\mathrm{Im}(\omega_{2,s}+\sqrt{-1}\chi)^{n-1})\wedge \chi\ge0\]
by Lemma 8.2 of \cite{CollinsJacobYau}. So $f_{2,s}\ge 0$. Finally, we fix $\omega_0$ and choose the path that
\[f_{3,s}=sf+(1-s)\frac{\int_M f\chi^n}{\int_M \chi^n}\] for $s\in [0,1]$.
We omit the arguments for the second path and the third path because they are similar to the first path. This finishes the proof of Proposition \ref{Solvability-assuming-sub-solution}.

\section{Current as a subsolution}

In this section, we prove Proposition \ref{Current-sub-solution}.

First of all, we make sense of the statement $\omega\in \bar\Gamma_{\chi,\theta_0,\Theta_0}$, when $\omega$ is the sum of a real closed (1,1)-form and a closed positive (1,1)-current.

The definition of smoothing is the same as Definition 3.1 of \cite{Chen}:
\begin{definition}
Fix a smooth non-negative function $\rho$ supported in [0,1] such that
\[\int_0^1\rho(t)t^{2n-1}\mathrm{Vol}(\partial B_1(0))dt=1.\]
For any $\delta>0$, the smoothing $\varphi_\delta$ is defined by
\[\varphi_\delta(x)=\int_{\mathbb{C}^n} \varphi(x-y)\delta^{-2n}\rho(|\frac{y}{\delta}|)d\mathrm{Vol}_y.\]
We can define the smoothing of a current using the similar formula. It is easy to see that the smoothing commutes with derivatives. So $(\sqrt{-1}\partial\bar\partial\varphi)_\delta=\sqrt{-1}\partial\bar\partial(\varphi_\delta)$.
\label{Definition-smoothing}
\end{definition}

Then we define $\omega\in \bar\Gamma_{\chi_0,\theta_0,\Theta_0}$ when $\chi_0$ is a K\"ahler form with constant coefficients on an open set $O\subset\mathbb{C}^n$:

\begin{definition}
Suppose that $\chi_0$ is a K\"ahler form with constant coefficients on an open set $O\subset\mathbb{C}^n$ and $\omega$ is the sum of a real closed (1,1)-form and a closed positive (1,1)-current. Then we say that $\omega\in \bar\Gamma_{\chi_0,\theta_0,\Theta_0}$ on $O$ if for any $\delta>0$, the smoothing $\omega_\delta$ satisfies $\omega_\delta\in \bar\Gamma_{\chi_0,\theta_0,\Theta_0}$ on the set $O_\delta=\{x:B_\delta(x)\subset O\}$.
\end{definition}

For general $\chi$, we make the following definition:

\begin{definition}
We say that $\omega\in \bar\Gamma_{\chi,\theta_0,\Theta_0}$ if for any $\epsilon_{12}>0$ and $\epsilon_{13}>0$ satisfying \[(1+\epsilon_{13})(\cot(\theta_0)+\epsilon_{12})>\cot(\theta_0),\] on any coordinate chart, for any open subset $O$, if $(1+\epsilon_{13})\chi_0\ge\chi\ge\chi_0$ on $O$ for a K\"ahler form $\chi_0$ with constant coefficients, then $\omega+\epsilon_{12}\chi \in \bar\Gamma_{\chi_0,\theta_0,\Theta_0}$.
\label{Definition-current-sub-solution}
\end{definition}

\begin{remark}
By Lemma \ref{Property-of-F} (10), when $\omega$ is smooth, it is easy to see that $\omega\in \bar\Gamma_{\chi,\theta_0,\Theta_0}$ in the sense of Definition \ref{Definition-current-sub-solution} if and only if $\omega\in \bar\Gamma_{\chi,\theta_0,\Theta_0}$ in the usual sense. Another key property is that the condition $\omega\in \bar\Gamma_{\chi,\theta_0,\Theta_0}$ in the sense of Definition \ref{Definition-current-sub-solution} is preserved under weak limit in the sense of current.
\label{Property-subsolution}
\end{remark}

Then we need the following lemma which is similar to Lemma 5.5 of \cite{Chen}:

\begin{lemma}
Suppose that $A$ is a $p\times p$ Hermitian matrix, $B$ is a diagonal $q\times q$ Hermitian matrix with $B_{ii}=\lambda_i$, $C$ is a $p\times q$ complex matrix, $D$ is another diagonal $q\times q$ matrix such that
\[D_{ii}=\frac{\mathrm{Im}\prod_{k\not=i}(\lambda_k+\sqrt{-1})}{\mathrm{Im}\prod_{k=1}^{q}(\lambda_k+\sqrt{-1})}.\]
Suppose that
\[Q_I(\begin{bmatrix}
    A       & C \\
    \bar C^{T} & B
\end{bmatrix})<\pi.
\]
Then $D$ is well defined. Moreover,
\[Q_{I}(A-CD\bar C^{T})+Q_I(B)\le Q_I(\begin{bmatrix}
    A       & C \\
    \bar C^{T} & B
\end{bmatrix})\]
and
\[P_{I}(A-CD\bar C^{T})+Q_I(B)\le P_I(\begin{bmatrix}
    A       & C \\
    \bar C^{T} & B
\end{bmatrix}).\]
\label{Subadditive-submatrix}
\end{lemma}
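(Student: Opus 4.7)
The plan is to reduce the lemma to a Schur-complement computation and then track how $\arg\det$ evolves along a linear path of non-Hermitian matrices with positive-definite imaginary part. Starting from
\[\det(M+\sqrt{-1}I) = \det(B+\sqrt{-1}I)\cdot \det\bigl[(A+\sqrt{-1}I) - C(B+\sqrt{-1}I)^{-1}\bar C^{T}\bigr],\]
and expanding $(B+\sqrt{-1}I)^{-1}_{kk} = \tfrac{\lambda_k}{1+\lambda_k^2} - \tfrac{\sqrt{-1}}{1+\lambda_k^2}$, the Schur complement takes the form $N_0 := (A-H) + \sqrt{-1}(I+K)$ with
\[H := \sum_{k}\tfrac{\lambda_k}{1+\lambda_k^2}C_{\cdot k}\bar C_{\cdot k}^{T}, \qquad K := \sum_{k}\tfrac{1}{1+\lambda_k^2}C_{\cdot k}\bar C_{\cdot k}^{T}\succeq 0.\]
Cauchy interlacing applied to the principal submatrix $B$ of the $(p+q)\times(p+q)$ matrix $\bigl[\begin{smallmatrix}A & C\\\bar C^T & B\end{smallmatrix}\bigr]$ yields $Q_I(B)\le Q_I(M)<\pi$, so $\sin Q_I(B)>0$; since $\mathrm{Im}\prod_k(\lambda_k+\sqrt{-1}) = \sin Q_I(B)\cdot\prod_k\sqrt{1+\lambda_k^2}$, $D$ is well defined. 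The same bound fixes branches so that $Q_I(M) = Q_I(B) + \arg\det N_0$.

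The second ingredient is the algebraic identity $H - CD\bar C^{T} = \cot(Q_I(B))\cdot K$. Writing $\theta_k := \arccot(\lambda_k)$, the coefficient of $C_{\cdot k}\bar C_{\cdot k}^{T}$ in $H - CD\bar C^{T}$ is $\sin\theta_k\cos\theta_k - \tfrac{\sin(\sum_{j\neq k}\theta_j)\sin\theta_k}{\sin(\sum_j\theta_j)}$, which collapses via the sum-angle formula $\sin(\sum_j\theta_j) = \sin(\sum_{j\neq k}\theta_j)\cos\theta_k + \cos(\sum_{j\neq k}\theta_j)\sin\theta_k$ to $\sin^{2}\theta_k\cot(\sum_j\theta_j)$, proving the claim. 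Setting $\theta_0^{*} := Q_I(B)$, I then linearly interpolate
\[N_t := \bigl(A - H + t\cot(\theta_0^{*})K\bigr) + \sqrt{-1}\bigl(I + (1-t)K\bigr), \qquad t\in[0,1],\]
so that $N_1 = (A - CD\bar C^{T}) + \sqrt{-1}I$ while the imaginary part $I+(1-t)K\succeq I$ remains positive definite throughout.

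Along this path, a direct computation, using $N_t^{-1} = S_t^{-1/2}(X_t + \sqrt{-1}I)^{-1}S_t^{-1/2}$ with $S_t := I+(1-t)K$, $X_t := S_t^{-1/2}(A - H + t\cot(\theta_0^{*})K)S_t^{-1/2}$ Hermitian, and $Z_t := S_t^{-1/2}K S_t^{-1/2}\succeq 0$, gives
\[\tfrac{d}{dt}\arg\det N_t = -\mathrm{tr}\bigl(Z_t\,(X_t + \cot(\theta_0^{*})I)(X_t^{2}+I)^{-1}\bigr).\]
A bootstrap closes the $Q_I$ inequality: at $t=0$ we have $\arg\det N_0 = Q_I(M) - Q_I(B) < \pi - \theta_0^{*}$; since every $\arccot\mu$ of an eigenvalue of $X_0$ is positive and they sum to less than $\pi-\theta_0^{*}$, each one is $< \pi - \theta_0^{*}$, hence every $\mu > -\cot\theta_0^{*}$, making $X_0 + \cot(\theta_0^{*})I\succ 0$ and the derivative nonpositive. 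While the eigenvalue bound holds, $\arg\det N_t$ is nonincreasing, which keeps it below $\pi-\theta_0^{*}$ and thus forces every individual $\arccot\mu$ to remain below $\pi-\theta_0^{*}$; the bound is self-propagating on $[0,1]$. Therefore $Q_I(A - CD\bar C^{T}) = \arg\det N_1 \le \arg\det N_0 = Q_I(M) - Q_I(B)$.

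For the $P_I$ inequality, write $P_I(M) = \max_{i}Q_I(M_{\hat{i}})$ and restrict to indices $i\le p$ (the $A$-block). Each such $M_{\hat{i}}$ has the same block structure $(A_{\hat{i}}, C_{\hat{i}}, B)$, Cauchy interlacing gives $Q_I(M_{\hat{i}})\le Q_I(M)<\pi$ so the $Q_I$ inequality just proved applies, and $(A - CD\bar C^{T})_{\hat{i}} = A_{\hat{i}} - C_{\hat{i}}D\bar C_{\hat{i}}^{T}$; maximizing over $i\in\{1,\ldots,p\}$ gives $P_I(A - CD\bar C^{T}) + Q_I(B)\le P_I(M)$. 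The main obstacle I anticipate is the bootstrap of the previous paragraph: the derivative of $\arg\det N_t$ is not pointwise signed without the constraint $X_t + \cot(\theta_0^{*})I\succeq 0$, and it is precisely the input $Q_I(M) < \pi$ that keeps this constraint alive throughout the interpolation; verifying self-propagation (rather than merely pointwise monotonicity) is the subtle step.
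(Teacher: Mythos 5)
Your $Q_I$ argument is essentially the paper's: the same Schur-complement decomposition, the same linear interpolation path $N_t$ (the paper writes it as $a_s+\sqrt{-1}c_s$ with $a_s = a + s\cot(Q_I(B))\,b$, $c_s = c+b-sb$), the same sign analysis of the derivative, and the same open/closed bootstrap on $[0,1]$ keeping the eigenvalue constraint alive. You render the monotonicity as a trace formula $-\mathrm{tr}\bigl(Z_t(X_t+\cot\theta_0^{*}I)(X_t^2+I)^{-1}\bigr)$, while the paper differentiates $\cot(Q_{c_s}(a_s))$ and invokes Lemma~8.2 of Collins--Jacob--Yau for the sign; these are the same computation. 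One small point worth spelling out: the phrase ``the same bound fixes branches'' for $\arg\det N_0 = Q_I(M)-Q_I(B)$ hides the additive-$2\pi$ ambiguity when $p\ge 2$; the paper resolves it cleanly by replacing $A\mapsto A+tI$, $B\mapsto B+tI$ and letting $t\to\infty$, then invoking continuity.

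The $P_I$ part has a genuine gap. You write $P_I(M) = \max_i Q_I(M_{\hat i})$, but $P_I$ is defined via the eigenvalues of $M$, while $Q_I(M_{\hat i})$ is computed from the eigenvalues of the coordinate principal submatrix; these agree only when $M$ is diagonal. (For $M=\bigl[\begin{smallmatrix}1&1\\1&1\end{smallmatrix}\bigr]$ one has $P_I(M)=\pi/2$ but $\max_i Q_I(M_{\hat i})=\pi/4$.) More to the point, the step you actually need to close the chain is $P_I(A-CD\bar C^T) \le \max_{i\le p} Q_I\bigl((A-CD\bar C^T)_{\hat i}\bigr)$, and this is false for the same reason: the left side can strictly exceed the right. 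The fix is two steps that the paper's proof supplies. First, conjugate by $\mathrm{diag}(U,I)$ for a unitary $U$ of $\mathbb{C}^p$ diagonalizing $A-CD\bar C^T$; this preserves the block structure, leaves $B$ untouched, and makes $P_I(A-CD\bar C^T) = Q_I\bigl((A-CD\bar C^T)_{\hat{i_0}}\bigr)$ for some coordinate index $i_0\le p$, so it suffices to argue on a single coordinate hyperplane. Second, you need the sharper interlacing bound $Q_I(M_{\hat{i_0}}) \le P_I(M)$, not merely $\le Q_I(M)$; this is the min-max principle $P_\chi(\omega)=\max_{V\text{ hyperplane}}Q_{\chi|_V}(\omega|_V)$ that the paper extracts from the $p=1$ case of its $Q$-estimate (and which also follows from $\mu_1\ge\nu_1\ge\mu_2\ge\cdots$, giving $\sum_i\arccot\nu_i\le\sum_{i\ge 2}\arccot\mu_i = P_I(M)$). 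Your proposal invokes Cauchy interlacing only in the weaker form $Q_I(M_{\hat i})\le Q_I(M)$, which is enough for $<\pi$ but not for the $P_I$ inequality.
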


\begin{proof}
Define $E$, $F$ be diagonal $q\times q$ matrices such that
\[E_{ii}=\frac{\lambda_i}{1+\lambda_i^2}, F_{ii}=\frac{1}{1+\lambda_i^2}.\]
We first claim that \[Q_{I+CF\bar C^{T}}(A-CE\bar C^{T})+Q_I(B)=Q_I(\begin{bmatrix}
    A       & C \\
    \bar C^{T} & B
\end{bmatrix}).\]
In fact, it is easy to see that
\[\begin{split}\det\begin{bmatrix}
    A+\sqrt{-1}I   & C \\
    \bar C^{T} & B+\sqrt{-1}I
\end{bmatrix}=\det\begin{bmatrix}
    A-C(B+\sqrt{-1}I)^{-1}\bar C^{T}+\sqrt{-1}I              & O \\
    O & B+\sqrt{-1}I
\end{bmatrix}.
\end{split}\]
Since \[A-C(B+\sqrt{-1}I)^{-1}\bar C^{T}+\sqrt{-1}I=A-CE\bar C^{T}+ \sqrt{-1}(I+CF\bar C^{T}),\]
it is easy to see that

\[Q_{I+CF\bar C^{T}}(A-CE\bar C^{T})+Q_I(B) \equiv Q_I(\begin{bmatrix}
    A       & C \\
    \bar C^{T} & B
\end{bmatrix}) \mod 2\pi.\]
This is also true when we replace $A$ by $A + t I$ and replace $B$ by $B + t I$ for $t\ge 0$. However, when $t$ is large enough, all the quantities are close to 0. So there is no multiple of $2\pi$ there. By continuity, there is also no multiple of $2\pi$ when $t=0$.

As a corollary of the claim, we see that $Q_I(B)<\pi$. This implies that
\[\mathrm{Im}\prod_{k=1}^{q}(\lambda_k+\sqrt{-1})>0.\]
So $D$ is well-defined.

Moreover,
\[D_{ii}-E_{ii} =\frac{-\mathrm{Re}\prod_{k=1}^{q}(\lambda_k+\sqrt{-1})}{(1+\lambda_i^2)(\mathrm{Im}\prod_{k=1}^{q}(\lambda_k+\sqrt{-1}))} =-\cot(Q_I(B))F_{ii}.\]

Now we write $A-CE\bar C^{T}$ as $a_{ij}$, and write $CF\bar C^{T}$ as $b_{ij}$. Define
\[a=\sqrt{-1}\sum_{i,j=1}^{q}a_{ij}dz^i\wedge d\bar z^j, b=\sqrt{-1}\sum_{i,j=1}^{q}b_{ij}dz^i\wedge d\bar z^j, c=\sqrt{-1}\sum_{i=1}^{q}dz^i\wedge d\bar z^i.\]
Then $Q_{c+b}(a)<\pi-Q_I(B)<\pi$.
Now we define \[I=\{t\in[0,1]\ \text {such that}\ Q_{c_s}(a_s)\le Q_{c+b}(a)\ \text{for all}\ s\in[0,t]\},\]
where $a_s=a+s\cot(Q_I(B))b$ and $c_s=c+b-sb$.
If $b=0$, then it is trivial that $I=[0,1]$. So we only need to consider the case when $b\not=0$.
It is also trivial that $0\in I$ and $I$ is closed. Now we assume that $t\in I$. Then
\[\begin{split}
&\frac{d}{ds}|_{s=t} \cot(Q_{c_s}(a_s))=\frac{d}{ds}|_{s=t}\frac{\mathrm{Re}(a_s+\sqrt{-1}c_s)^n}{\mathrm{Im}(a_s+\sqrt{-1}c_s)^n}\\
&=\frac{n(\mathrm{Re}(a_s+\sqrt{-1}c_s)^{n-1}\wedge \cot(Q_I(B))b+\mathrm{Im}(a_s+\sqrt{-1}c_s)^{n-1}\wedge b)}{\mathrm{Im}(a_s+\sqrt{-1}c_s)^n}\\
&-\frac{\mathrm{Re}(a_s+\sqrt{-1}c_s)^n}{\mathrm{Im}(a_s+\sqrt{-1}c_s)^n}\frac{n(-\mathrm{Re}(a_s+\sqrt{-1}c_s)^{n-1}\wedge b+\mathrm{Im}(a_s+\sqrt{-1}c_s)^{n-1}\wedge \cot(Q_I(B)) b)}{\mathrm{Im}(a_s+\sqrt{-1}c_s)^n}\\
&=n(\cot(Q_I(B))+\cot(Q_{c_s}(a_s))) \cdot\\
&\frac{(\mathrm{Re}(a_s+\sqrt{-1}c_s)^{n-1}-\cot(Q_I(B)+Q_{c_s}(a_s))\mathrm{Im}(a_s+\sqrt{-1}c_s)^{n-1})\wedge b} {\mathrm{Im}(a_s+\sqrt{-1}c_s)^n}
>0
\end{split}\]
by Lemma 8.2 of \cite{CollinsJacobYau}. So $I$ is open. It must be $[0,1]$. So $1\in I$. It follows that
\[Q_I(A-CD\bar C^{T})=Q_{c_1}(a_1)\le Q_{c+b}(a)=Q_{I+CF\bar C^{T}}(A-CE\bar C^{T}).\]
Thus, we have proved that
\[Q_{I}(A-CD\bar C^{T})+Q_I(B)\le Q_I(\begin{bmatrix}
    A       & C \\
    \bar C^{T} & B
\end{bmatrix}).\]

As a special case, if $p=1$, then $A$ is a number and $A-CD\bar C^{T}\le A$. So
\[Q_I(B)\le Q_I(\begin{bmatrix}
    A       & C \\
    \bar C^{T} & B
\end{bmatrix})-Q_{I}(A-CD\bar C^{T})\le Q_I(\begin{bmatrix}
    A       & C \\
    \bar C^{T} & B
\end{bmatrix})-Q_{I}(A).\]
It is easy to see that \[Q_I(B)\le Q_I(\begin{bmatrix}
    A       & C \\
    \bar C^{T} & B
\end{bmatrix})-Q_{I}(A)=Q_I(\begin{bmatrix}
    A       & C \\
    \bar C^{T} & B
\end{bmatrix})-\arccot(A)\]
also holds if $B$ is only an Hermitian matrix which may be non-diagonal.
It follows that
\[P_\chi(\omega)=\max_{V\ \text{is a hyperplane}} Q_{\chi|_V}(\omega|_V),\]
for any $\chi>0$ and any $\omega$ satisfying $Q_\chi (\omega)<\pi$.
This is a generalization of the celebrated Courant–Fischer–Weyl min-max principle. As a corollary, we get the required estimate for $P$ by taking restrictions on hyperplanes.
\end{proof}

Now we need to prove the following:

\begin{proposition}
Let $\chi_{M\times M}=\pi_1^*\chi+\pi_2^*\chi$ be a K\"ahler form on $M\times M$. Let $C_{14}$, $\theta_{15}$, $\Theta_{16}$, $\Theta_{17}$, $\Theta_{18}$ be constants depending only on $n$, $\theta_0$, $\Theta_0$ such that
\[\theta_{15}=\theta_0+n\arccot(C_{14})<\Theta_{16}=\Theta_{17}+n\arccot(C_{14})<\Theta_{18}=\Theta_0+n\arccot(C_{14})<\pi.\]
Suppose that
\[\omega_{19}=\pi_1^*\omega_0+C_{14}\pi_2^*\chi+\sqrt{-1}\partial\bar\partial\varphi_{19}\]
is a real closed (1,1)-form on $M\times M$ such that $\omega_{19}\in\Gamma_{\chi_{M\times M}, \theta_{15},\Theta_{16}}$.
Define $\omega_{20}$ by \[\omega_{20}=\frac{\sum_{k=0}^{\lfloor\frac{n-1}{2}\rfloor}(-1)^k\frac{n!}{(n-2k)!(2k+1)!}(\pi_1)_*(\omega_{19}^{n-2k}\wedge\pi_2^*\chi^{2k+1})}{\int_M \mathrm{Im}(C_{14}\chi+\sqrt{-1}\chi)^n},\]
then $\omega_{20} \in \Gamma_{\chi, \theta_0, \Theta_0}$.
\label{Push-forward}
\end{proposition}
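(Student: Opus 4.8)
The plan is to recognize the numerator as a fiber integral of an $(n{+}1)$-st power, reduce the statement to a single point, express $\omega_{20}$ there as a convex average of the Schur–complement matrices produced by Lemma~\ref{Subadditive-submatrix}, and close the estimate with two Jensen inequalities together with an identity that converts the ``thickness'' $C_{14}$ of the second factor into the angle $n\arccot(C_{14})$. Concretely, since $\frac{n!}{(n-2k)!(2k+1)!}=\frac{1}{n+1}\binom{n+1}{2k+1}$, a binomial expansion shows the numerator of $\omega_{20}$ equals $\frac{1}{n+1}(\pi_1)_*\bigl(\mathrm{Im}(\omega_{19}+\sqrt{-1}\pi_2^*\chi)^{n+1}\bigr)$. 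Fix $x\in M$; it suffices to prove that the eigenvalues of $\chi^{-1}\omega_{20}$ at $x$ lie in $\Gamma_{\theta_0,\Theta_0}$. I work in a coframe at $(x,y)$ that is unitary for $\chi_{M\times M}$, adapted to the product, and diagonalizes the second–factor block, so that $\omega_{19}(x,y)=\begin{bmatrix}A(y)&C(y)\\ \bar C(y)^{T}&B(y)\end{bmatrix}$ with $B(y)$ diagonal equal to the matrix of $\omega_{19}|_{\{x\}\times M}$ with respect to $\chi$ at $y$; then Lemma~\ref{Subadditive-submatrix} applies verbatim.

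\emph{Fiber–integration formula.} Expanding the $(n{+}1)$-st power and retaining only the monomials that survive $(\pi_1)_*$ (those containing the full fiber volume $\pi_2^*\chi^n$), one keeps, for each base direction $j$, the determinant of the block indexed by $\{j\}$ together with all fiber indices; that determinant, divided by $\det(B(y)+\sqrt{-1}I)$, is the Schur complement $A(y)-C(y)(B(y)+\sqrt{-1}I)^{-1}\bar C(y)^{T}$. Taking the imaginary part and dividing by the positive density $\mathrm{Im}\det(B(y)+\sqrt{-1}I)$ identifies $\omega_{20}(x)$, as a Hermitian matrix, with $\int_M\widetilde N(y)\,d\nu_x(y)$, where $\widetilde N(y)=A(y)-C(y)D_{B(y)}\bar C(y)^{T}$ is exactly the matrix $A-CD\bar C^{T}$ of Lemma~\ref{Subadditive-submatrix} — because the diagonal $D$ there satisfies $D_{ii}-E_{ii}=-\cot(Q_\chi(\omega_{19}|_{\{x\}\times M})(y))F_{ii}$, so $D_{B}=B(B^2+I)^{-1}-\cot(q(y))(B^2+I)^{-1}$ with $q(y)=Q_\chi(\omega_{19}|_{\{x\}\times M})(y)$ — and $d\nu_x=\mathrm{Im}(\omega_{19}|_{\{x\}\times M}+\sqrt{-1}\chi)^n\big/\int_M\mathrm{Im}(\omega_{19}|_{\{x\}\times M}+\sqrt{-1}\chi)^n$. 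This $d\nu_x$ is a probability measure: its density is pointwise positive since applying Lemma~\ref{Subadditive-submatrix} to $\omega_{19}(x,y)$ gives $q(y)\le Q_{\chi_{M\times M}}(\omega_{19})(x,y)<\Theta_{16}<\pi$, hence $\mathrm{Im}\det(B(y)+\sqrt{-1}I)>0$; and its normalizing integral equals the denominator $Z:=\int_M\mathrm{Im}(C_{14}\chi+\sqrt{-1}\chi)^n$ of $\omega_{20}$ because $\omega_{19}|_{\{x\}\times M}$ lies in the class $C_{14}[\chi]$ (the case $\varphi_{19}=0$, where $\omega_{20}=\omega_0$, checks the constants).

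\emph{The two estimates.} Lemma~\ref{Subadditive-submatrix} gives $Q_\chi(\widetilde N(y))+q(y)\le Q_{\chi_{M\times M}}(\omega_{19})(x,y)<\Theta_{16}$ and $P_\chi(\widetilde N(y))+q(y)\le P_{\chi_{M\times M}}(\omega_{19})(x,y)<\theta_{15}$, so each $\widetilde N(y)\in\Gamma_{\chi,\theta_{15},\Theta_{16}}$ and $\omega_{20}(x)\in\bar\Gamma_{\chi,\theta_{15},\Theta_{16}}$ by Lemma~\ref{Property-of-F}(10). Concavity of $\cot Q_\chi$ there (Lemma~\ref{Property-of-F}(9) with $f=0$, applied with $\theta_{15},\Theta_{16}$ in place of $\theta_0,\Theta_0$) and Jensen yield $\cot Q_\chi(\omega_{20})(x)\ge\int_M\cot Q_\chi(\widetilde N(y))\,d\nu_x(y)$. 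Writing $\alpha(y)=\cot Q_\chi(\widetilde N(y))$ and $\beta(y)=\cot q(y)$, the bound $Q_\chi(\widetilde N(y))+q(y)<\Theta_{16}$ with $\cot(a+b)=\frac{\alpha\beta-1}{\alpha+\beta}$ and $\alpha+\beta>0$ (valid since $a+b<\pi$) is equivalent to $(\alpha(y)-\cot\Theta_{16})(\beta(y)-\cot\Theta_{16})>\csc^2\Theta_{16}$ with both factors positive. Combining this with the key identity
\[\int_M\beta(y)\,d\nu_x(y)=\frac{\int_M\mathrm{Re}(\omega_{19}|_{\{x\}\times M}+\sqrt{-1}\chi)^n}{\int_M\mathrm{Im}(\omega_{19}|_{\{x\}\times M}+\sqrt{-1}\chi)^n}=\frac{\mathrm{Re}(C_{14}+\sqrt{-1})^n}{\mathrm{Im}(C_{14}+\sqrt{-1})^n}=\cot\bigl(n\arccot(C_{14})\bigr),\]
(again by invariance under the cohomology class) and Jensen for the convex function $t\mapsto(t-\cot\Theta_{16})^{-1}$, one gets $\int_M\alpha\,d\nu_x>\cot\bigl(\Theta_{16}-n\arccot(C_{14})\bigr)=\cot\Theta_{17}$, hence $Q_\chi(\omega_{20})(x)<\Theta_{17}<\Theta_0$. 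For the $P$-estimate I restrict to an arbitrary hyperplane $W\subset T_xM$: restriction commutes with $\int_M(\cdot)\,d\nu_x$, so $\omega_{20}(x)|_W=\int_M\widetilde N(y)|_W\,d\nu_x(y)$, and using $P_\chi(\cdot)=\max_W Q_{\chi|_W}(\cdot|_W)$ (the min–max in Lemma~\ref{Subadditive-submatrix}), $Q_{\chi|_W}(\widetilde N(y)|_W)\le P_\chi(\widetilde N(y))$, and the $P$-half of the subadditivity, the same computation with $\theta_{15}$ replacing $\Theta_{16}$ ends at $\cot\bigl(\theta_{15}-n\arccot(C_{14})\bigr)=\cot\theta_0$, giving $Q_{\chi|_W}(\omega_{20}(x)|_W)<\theta_0$ for all $W$, hence $P_\chi(\omega_{20})(x)<\theta_0$. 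Thus $\omega_{20}\in\Gamma_{\chi,\theta_0,\Theta_0}$.

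\emph{Main obstacle.} The substantive work is the fiber–integration step: showing that the push–forward is genuinely the average $\int_M\widetilde N(y)\,d\nu_x(y)$ of the Lemma~\ref{Subadditive-submatrix} Schur complements against the positive density $\mathrm{Im}(\omega_{19}|_{\{x\}\times M}+\sqrt{-1}\chi)^n$ — whose positivity is exactly where the hypothesis $\omega_{19}\in\Gamma_{\chi_{M\times M},\theta_{15},\Theta_{16}}$ is used — and isolating the identity $\int_M\cot q\,d\nu_x=\cot(n\arccot(C_{14}))$ that accounts for the angle shift. Granting these, the rest is bookkeeping with $\cot$ addition formulas and two Jensen inequalities, and the constants $\theta_{15},\Theta_{16},\Theta_{17}$ are tuned precisely so the arithmetic closes at $\theta_0$ and $\Theta_0$.
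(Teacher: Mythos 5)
Your proposal is correct and follows essentially the same route as the paper: the same identification of $\omega_{20}(x)$ as a fiberwise average of the Schur complements $A-CD\bar C^{T}$ against the probability density proportional to $\mathrm{Im}(\omega_{19}|_{\{x\}\times M}+\sqrt{-1}\chi)^n$, the same use of Lemma \ref{Subadditive-submatrix} for the pointwise angle bounds, the same appeal to Lemma \ref{Property-of-F} (9)--(10) plus Jensen, and the same cohomological evaluation of the fiber integrals producing the shift $n\arccot(C_{14})$. The only cosmetic difference is bookkeeping: you run two Jensen inequalities with reference angle $\Theta_{16}$ (landing at $\Theta_{17}<\Theta_0$, and at $\theta_0$ for $P$ via hyperplane restriction), while the paper applies a single Jensen to the convex function $\frac{1}{\cot(Q_\chi(\omega))-\cot(\Theta_{18})}$ with reference angle $\Theta_{18}$.
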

\begin{remark}
Proposition \ref{Push-forward} also holds when $\Theta_{16}=\Theta_{18}$. However, we require $\Theta_{16}<\Theta_{18}$ instead to make sure that if $\omega\in\Gamma_{\chi, \theta_{15}, \Theta_{16}}$, then $\frac{1}{\cot(Q_\chi(\omega))-\cot(\Theta_{18})}$ only changes a little bit when do the truncation in Section 3 of \cite{Chen}.
\label{Remark-push-forward}
\end{remark}

\begin{proof}
As in Section 5 of \cite{Chen}, at each $p=(p_1,p_2)\in M\times M$, let $z^{(1)}_i$ be the local coordinates on $M\times\{p_2\}$ and $z^{(2)}_i$ be the local coordinates on $\{p_1\}\times M$.
Then
\[\omega_{19}=\omega^{(1)}+\omega^{(2)}+\omega^{(1,2)}+\omega^{(2,1)},\]
where \[\omega^{(1)}=\sum_{i,j=1}^{n}\sqrt{-1}\omega^{(1)}_{i\bar j}dz_i^{(1)}\wedge d\bar z_j^{(1)}, \omega^{(2)}=\sum_{i,j=1}^{n}\sqrt{-1}\omega^{(2)}_{i\bar j}dz_i^{(2)}\wedge d\bar z_j^{(2)},\]
\[\omega^{(1,2)}=\sum_{i,j=1}^{n}\sqrt{-1}\omega^{(1,2)}_{i\bar j}dz_i^{(1)}\wedge d\bar z_j^{(2)}, \omega^{(2,1)}=\overline{\omega^{(1,2)}}.\]
After changing the definition of $z^{(2)}_i$ if necessary, we can assume that \[\pi_2^*\chi=\sqrt{-1}\sum_{i=1}^n dz^{(2)}_i\wedge d\bar z^{(2)}_i\] and \[\omega^{(2)}=\sqrt{-1}\sum_{i=1}^n \lambda_i dz^{(2)}_i\wedge d\bar z^{(2)}_i\] at $p$.
Then by the calculation in Section 5,
$\omega_{20}=(\pi_1)_*\omega_{21}$,
where \[\omega_{21}=\frac{\mathrm{Im}(\omega^{(2)}+\sqrt{-1}\pi_2^*\chi)^n}{\int_{\{p_1\}\times M}\mathrm{Im}(\omega^{(2)}+\sqrt{-1}\pi_2^*\chi)^n}\wedge
\sum_{i,j,l=1}^{n}(\omega^{(1)}_{i\bar j}-\omega^{(1,2)}_{i\bar l}\frac{\mathrm{Im}\prod_{k\not=l}(\lambda_k+\sqrt{-1})}{\mathrm{Im}\prod_{k=1}^{n}(\lambda_k+\sqrt{-1})}\overline{\omega^{(1,2)}_{j\bar l}})dz_i^{(1)}\wedge d\bar z_j^{(1)}.\]
By Lemma \ref{Subadditive-submatrix}, \[Q_{\pi_1^*\chi}(\sum_{i,j,l=1}^{n}(\omega^{(1)}_{i\bar j}-\omega^{(1,2)}_{i\bar l}\frac{\mathrm{Im}\prod_{k\not=l}(\lambda_k+\sqrt{-1})}{\mathrm{Im}\prod_{k=1}^{n}(\lambda_k+\sqrt{-1})}\overline{\omega^{(1,2)}_{j\bar l}})dz_i^{(1)}\wedge d\bar z_j^{(1)})<\Theta_{16}-Q_{\pi_2^*\chi}(\omega^{(2)}).\]

Now we consider the function $\frac{1}{\cot(Q_\chi(\omega))-\cot(\Theta_{18})}$ for $\omega\in\Gamma_{\chi,\theta_{15},\Theta_{16}}$.
Since \[D^2 (\frac{1}{\cot(Q_\chi(\omega))-\cot(\Theta_{18})})=\frac{-D^2 \cot(Q_\chi(\omega))}{(\cot(Q_\chi(\omega))-\cot(\Theta_{18}))^2}+\frac{2D\cot(Q_\chi(\omega)) \otimes D\cot(Q_\chi(\omega))}{(\cot(Q_\chi(\omega))-\cot(\Theta_{18}))^3}\]
and $\cot(Q_\chi(\omega))$ is concave by Lemma \ref{Property-of-F} (9), we see that $\frac{1}{\cot(Q_\chi(\omega))-\cot(\Theta_{18})}$ is convex on $\Gamma_{\chi,\theta_{15},\Theta_{16}}$.

So
\[\begin{split}
&\frac{1}{\cot(Q_\chi(\omega_{20}))-\cot(\Theta_{18})}\\
&<\int_{\{p_1\}\times M} \frac{1}{ \cot(\Theta_{18}-Q_{\pi_2^*\chi}(\omega^{(2)}))-\cot(\Theta_{18})}\frac{\mathrm{Im}(\omega^{(2)}+\sqrt{-1}\pi_2^*\chi)^n}{\int_{\{p_1\}\times M}\mathrm{Im}(\omega^{(2)}+\sqrt{-1}\pi_2^*\chi)^n}\\
&=\frac{\int_{\{p_1\}\times M} (\mathrm{Re}(\omega^{(2)}+\sqrt{-1}\pi_2^*\chi)^n-\cot(\Theta_{18})\mathrm{Im}(\omega^{(2)}+\sqrt{-1}\pi_2^*\chi)^n)} {(1+\cot^2(\Theta_{18}))\int_{\{p_1\}\times M}\mathrm{Im}(\omega^{(2)}+\sqrt{-1}\pi_2^*\chi)^n}\\
&=\frac{\mathrm{Re}(C_{14}+\sqrt{-1})^n-\cot(\Theta_{18})\mathrm{Im}(C_{14}+\sqrt{-1})^n} {(1+\cot^2(\Theta_{18}))\mathrm{Im}(C_{14}+\sqrt{-1})^n}\\
&=\frac{1}{\cot(\Theta_0)-\cot(\Theta_{18})}.
\end{split}\]
By a similar calculation, $\frac{1}{\cot(P_\chi(\omega_{20}))-\cot(\Theta_{18})}<\frac{1}{\cot(\theta_0)-\cot(\Theta_{18})}$. By the convexity of the set $\Gamma_{\chi,\theta_{15},\Theta_{16}}$ we also know that $\omega_{20}\in\Gamma_{\chi,\theta_{15},\Theta_{16}}$. It follows that $\omega_{20}\in \Gamma_{\chi, \theta_0, \Theta_0}$.
\end{proof}

It is easy to see that there exist constants $C_{21}>0$, $C_{22}>0$ depending only on $n,\theta_0,\Theta_0$ such that
\[\mathrm{Re}\prod_{k=1}^{n}(\lambda_k+\sqrt{-1})-\cot(\theta_0)\mathrm{Im}\prod_{k=1}^{n}(\lambda_k+\sqrt{-1})\le C_{21} \prod_{k=1}^{n}(\lambda_k-\cot(\Theta_{18}))\]
for all $\lambda\in\bar\Gamma_{\theta_{15},\Theta_{16}}$. We also know that $\omega-\cot(\Theta_{18})\chi$ is a K\"ahler form for all $\omega\in \Gamma_{\chi,\theta_{15},\Theta_{16}}$. Combining these facts with Proposition \ref{Solvability-assuming-sub-solution}, Remark \ref{Property-subsolution}, Proposition \ref{Push-forward} and Remark \ref{Remark-push-forward}, we can prove Proposition \ref{Current-sub-solution} using a similar method as Section 3 of \cite{Chen}.

\section{Finish of proof}

In this section, we finish the proof of Theorem \ref{Main-theorem}. If (1) of Theorem \ref{Main-theorem} holds, then for any smooth test family $\omega_{t,0}$, we can define another family $\omega_{t,\varphi}=\omega_{t,0}+\sqrt{-1}\partial\bar\partial\varphi$. Then $P_{\chi}(\omega_{t,\varphi})<P_{\chi}(\omega_\varphi)\le\theta_0$. So by Lemma 8.2 of \cite{CollinsJacobYau},
\[\begin{split}
&\frac{d}{dt}\int_V(\mathrm{Re}(\omega_{t,\varphi}+\sqrt{-1}\chi)^p-\cot(\theta_0)\mathrm{Im}(\omega_{t,\varphi}+\sqrt{-1}\chi)^p)\\
&=\int_V p(\mathrm{Re}(\omega_{t,\varphi}+\sqrt{-1}\chi)^{p-1}-\cot(\theta_0)\mathrm{Im}(\omega_{t,\varphi}+\sqrt{-1}\chi)^{p-1})\wedge \frac{d}{dt}\omega_{t,\varphi}\ge0.
\end{split}\]
By Lemma 8.2 of \cite{CollinsJacobYau}, we also know that there exists a constant $\epsilon_1>0$ such that for any point $x\in M$ and any $p$-dimensional vector space $V_x\subset T_x M$, the restriction of the form
\[\mathrm{Re}(\omega_{t,0}+\sqrt{-1}\chi)^p-\cot(\theta_0)\mathrm{Im}(\omega_{t,0}+\sqrt{-1}\chi)^p-(n-p)\epsilon_1 \chi^p\]
on $V_x$ is positive. Then we get (2) of Theorem \ref{Main-theorem} of using the fact that
\[\begin{split}
&\int_V(\mathrm{Re}(\omega_{t,0}+\sqrt{-1}\chi)^p-\cot(\theta_0)\mathrm{Im}(\omega_{t,0}+\sqrt{-1}\chi)^p)\\
&=\int_V(\mathrm{Re}(\omega_{t,\varphi}+\sqrt{-1}\chi)^p-\cot(\theta_0)\mathrm{Im}(\omega_{t,\varphi}+\sqrt{-1}\chi)^p).
\end{split}\]

It is trivial that (2) of Theorem \ref{Main-theorem} implies (3) of Theorem \ref{Main-theorem}. On the other hands, as long as Proposition \ref{Main-induction} holds, then (3) of Theorem \ref{Main-theorem} implies (1) of Theorem \ref{Main-theorem} by choosing $f=0$ and choosing arbitrary $\Theta_0\in(\theta_0,\pi)$.

Therefore, it suffices to prove Proposition \ref{Main-induction}. We prove it by induction on the dimension $n$ of $M$. When $n=1$, it is trivial. So we assume that it has been proved for all smaller dimensions and then try to prove it. Define $I$ be the set of $t\in[0,\infty)$ such that there exists a smooth function $\varphi_t$ and a constant $c_t\ge 0$ satisfying $\omega_{t,\varphi_t}=\omega_{t,0}+\sqrt{-1}\partial\bar\partial\varphi_t\in\Gamma_{\chi,\theta_0,\Theta_0}$ and
\[\mathrm{Re}(\omega_{t,\varphi_t}+\sqrt{-1}\chi)^n-\cot(\theta_0)\mathrm{Im}(\omega_{t,\varphi_t}+\sqrt{-1}\chi)^n-c_t\chi^n=0.\]
By (C) of the definition of test family and Proposition \ref{Solvability-assuming-sub-solution}, $I$ is non-empty. We also know that $I$ is open by Proposition \ref{Solvability-assuming-sub-solution}. To show the closeness, assume that $t_k\in I$ is a sequence converging to $t_\infty$. Then by monotonicity of $P_\chi$ and Proposition \ref{Solvability-assuming-sub-solution}, we know that $t\in I$ for all $t>t_\infty$. We need to show that $t_\infty\in I$. Without loss of generality, assume that $t_\infty=0$. By Proposition \ref{Current-sub-solution}, there exist a constant $\epsilon_3>0$ and a current $\omega_4=\omega_0-\epsilon_3\chi+\sqrt{-1}\partial\bar\partial\varphi_4$ such that $\omega_4\in \bar\Gamma_{\chi,\theta_0,\Theta_0}$ in the sense of Definition \ref{Definition-current-sub-solution}. By Proposition \ref{Solvability-assuming-sub-solution}, it suffices to find $\omega_{23}=\omega_0+\sqrt{-1}\partial\bar\partial\varphi_{23}\in\Gamma_{\chi,\theta_0,\Theta_0}$. We essentially follow the works in \cite{Chen}, with minor adjustments to deal with the problem that $\omega_0$ is no longer K\"ahler.

Let $\epsilon_{12}=\min\{\frac{\epsilon_3}{3},\frac{1}{100}\}$, then we get the corresponding $\epsilon_{13}$. By choosing $\epsilon_{13}$ small enough, we can also assume that as long as $(1+\epsilon_{13})\chi_0\ge\chi\ge\chi_0$ and $\omega\in \bar\Gamma_{\chi_0,\theta_0,\Theta_0}$, then $\omega+\epsilon_{12}\chi\in \Gamma_{\chi,\theta_0,\Theta_0}$. We can also assume that $\epsilon_{13}<\frac{1}{100}$. Then similar to Section 4 of \cite{Chen}, there exists a finite number of coordinate balls $B_{2r}(x_i)$ such that $B_r(x_i)$ is a cover of $M$. Moreover,let $\varphi^i_{\omega_0}, \varphi^i_{\chi}$ be potentials such that
$\sqrt{-1}\partial\bar\partial\varphi^i_{\omega_0}=\omega_0$ and $\sqrt{-1}\partial\bar\partial\varphi^i_{\chi}=\chi$ on $B_{2r}(x_i)$, then we can also required that
\[|\varphi^i_{\chi}-|z|^2|\le \frac{\epsilon_{12}r^2}{100(1+|\cot(\theta_0)|)}\]
and
\[\sqrt{-1}\partial\bar\partial|z|^2\le \chi\le (1+\epsilon_{13})\sqrt{-1}\partial\bar\partial|z|^2 \]
on $B_{2r}(x_i)$.
By the uniform continuity of $\varphi^i_{\omega_0}$ there exists $\epsilon_{24}<\frac{r}{5}$ such that $|\varphi^i_{\omega_0}(x)-\varphi^i_{\omega_0}(y)|\le \frac{\epsilon_{12}r^2}{100}$ for all $x\in \overline{B_{\frac{9r}{5}}(x_i)}$ such that $|x-y|\le\epsilon_{24}$.

As in \cite{Chen}, we take $\delta<\frac{\epsilon_{24}\epsilon_{12}}{100(1+|\cot(\theta_0)|)}$ and let $\varphi_\delta^i$ be the smoothing of $\varphi^i_{\omega_0}-2\epsilon_{12}\varphi^i_{\chi}+\varphi_4$, and let $\varphi_{25}^i=\varphi^i_{\delta}-\varphi^i_{\omega_0}+\epsilon_{12}\varphi^i_{\chi}$, then we know that $\omega_0+\sqrt{-1}\partial\bar\partial\varphi_{25}^i\in\Gamma_{\chi,\theta_0,\Theta_0}$ on $\overline{B_{\frac{9r}{5}}(x_i)}$. We also know that $\omega_0-3\epsilon_{12}\chi+\varphi_4-\cot(\theta_0)\chi$ is a positive current.

Similar to \cite{Chen}, we pick a small enough number
\[\epsilon_{26}=\frac{\epsilon_{12} r^2}{100(\int_0^1\log(\frac{1}{t})\mathrm{Vol}(\partial B_1(0))t^{2n-1}\rho(t)dt+\log 2+\frac{3^{2n-1}}{2^{2n-3}}\log 2)},\] where $\rho$ is function in Definition \ref{Definition-smoothing}. Then we consider the set $Y$ where the Lelong number of $\varphi_4$ is at least $\epsilon_{26}$. By Siu's work \cite{Siu}, $Y$ is a subvariety.

If we can find a smooth function $\varphi_{27}$ near $Y$ such that $\omega_0+\sqrt{-1}\partial\bar\partial\varphi_{27}\in\Gamma_{\chi,\theta_0,\Theta_0}$, then using the methods in \cite{Chen}, as long as $\delta$ is small enough, the regularized maximum of $\varphi_{27}+3\epsilon_{26}\log\delta$ with $\varphi_{25}^i$ provides the required smooth function $\varphi_{23}$ on $M$.

Therefore, we only need to find $\varphi_{27}$. By Hironaka's desingularization theorem, there exists a blow-up $\tilde M$ of $M$ obtained by a sequence of blow-ups with smooth centers such that the proper transform $\tilde Y$ of $Y$ is smooth. Without loss of generality, assume that we only need to blow up once because otherwise we just repeat the process. Let $\pi$ be the projection from $\tilde M$ to $M$. Let $E$ be the exceptional divisor. Let $s$ be the defining section of $E$. Let $h$ be any smooth metric on the line bundle $[E]$, then $\frac{\sqrt{-1}}{2\pi}\partial\bar\partial\log|s|_h^2=[E]+\omega_{28}$ by the Poincar\'e-Lelong equation. Then there exists a constant $C_{29}$ such that the $\omega_{30}=\omega_{28}+C_{29}\pi^*\chi>0$.

Let $\omega_{2,0}$ be $\omega_{t,0}$ when $t=2$ and $\omega_{1,0}$ be $\omega_{t,0}$ when $t=1$. Then there exists a constant $\epsilon_{31}>0$ such that $\omega_{2,0}-\omega_{1,0}\ge\epsilon_{31}\chi$. There exists a smooth function $\varphi_{32}$ on $M$ such that $\omega_{1,0}+\sqrt{-1}\partial\bar\partial\varphi_{32}\in\Gamma_{\chi,\theta_0,\Theta_0}$ on $M$. It implies that
\[\begin{split}
\int_V(\mathrm{Re}(\omega_{t,0}-\epsilon_{31}\chi+\sqrt{-1}\chi)^p-\cot(\theta_0)\mathrm{Im}(\omega_{t,0}-\epsilon_{31}\chi+\sqrt{-1}\chi)^p)\\
\ge\int_V(\mathrm{Re}(\omega_{1,0}+\sqrt{-1}\chi)^p-\cot(\theta_0)\mathrm{Im}(\omega_{1,0}+\sqrt{-1}\chi)^p)\ge (n-p)\epsilon_1\int_V \chi^p.
\end{split}\]
for all $t\ge 2$ and all $p$-dimensional subvariety $V$ of $M$. By choosing $\epsilon_{31}$ small enough, using the fact that $\omega_{t,0}$ is bounded with respect to $\chi$ for all $t\in[0,2]$, we can also assume that
\[\int_V(\mathrm{Re}(\omega_{t,0}-\epsilon_{31}\chi+\sqrt{-1}\chi)^p-\cot(\theta_0)\mathrm{Im}(\omega_{t,0}-\epsilon_{31}\chi+\sqrt{-1}\chi)^p)\ge (n-p)\frac{\epsilon_1}{2}\int_V \chi^p.\]
for all $t\ge 0$ and all $p$-dimensional subvariety $V$ of $M$.

Now we want to find constants $0<\epsilon_{33}<1$ and $C_{34}$ independent of $t$ and consider the K\"ahler form $\pi^*\chi+\epsilon_{33}\omega_{30}$ on $\tilde M$ and the test family \[\pi^*\omega_{t,0}-\epsilon_{31}\pi^*\chi+(t+C_{34})\epsilon_{33}\omega_{30}\] on $\tilde M$. We know that $\pi(E)$ is smooth. So by induction hypothesis, as in \cite{Chen}, we can find a smooth function $\varphi_{35}$ on $M$ such $\omega_{35}=\omega_0+\sqrt{-1}\partial\bar\partial\varphi_{35}$ satisfies $\omega_{35}\in \Gamma_{\chi,\theta_0,\Theta_0}$ on a neighborhood $U_{36}$ of $\pi(E)$. By shrinking $U_{36}$ and replacing $\epsilon_{31}$ if necessary, we can assume that there exists a constant $\epsilon_{37}>0$ such that $\omega_{35}-\epsilon_{31}\chi\in \Gamma_{\chi,\theta_0-\epsilon_{37},\Theta_0}$ on $U_{36}$. By compactness, there exists a constant $\epsilon_{38}>0$ such that $\omega_{1,0}+\sqrt{-1}\partial\bar\partial\varphi_{32}\in\Gamma_{\chi,\theta_0-\epsilon_{38},\Theta_0}$ on $M$.

Then we required that $C_{34}>\cot(\frac{\epsilon_{37}}{n})$ and $C_{34}>\cot(\frac{\epsilon_{38}}{n})$. By Lemma 8.2 of \cite{CollinsJacobYau}, when $t\ge 2$,
\[\begin{split}
&\mathrm{Im}(e^{-\sqrt{-1}\theta_0}(\pi^*\omega_{t,32}+(t+C_{34})\epsilon_{33}\omega_{30}
+\sqrt{-1}(\pi^*\chi+\epsilon_{33}\omega_{30}))^q)\\
-&\mathrm{Im}(e^{-\sqrt{-1}\theta_0}(\pi^*\omega_{t,32}
+\sqrt{-1}\pi^*\chi)^q)\\
=&\mathrm{Im}(e^{-\sqrt{-1}\theta_0}\sum_{k=0}^{q-1}\frac{q!}{k!(q-k)!}(\pi^*\omega_{t,32}
+\sqrt{-1}\pi^*\chi)^k
\wedge((t+C_{34}+\sqrt{-1})\epsilon_{33}\omega_{30})^{q-k})\\
\le&\mathrm{Im}(e^{-\sqrt{-1}\theta_0}(t+C_{34}+\sqrt{-1})^q\epsilon_{33}^q\omega_{30}^q),
\end{split}\]
where $\omega_{t,32}=\omega_{t,0}-\epsilon_{31}\chi+\sqrt{-1}\partial\bar\partial\varphi_{32}$.
So
\[\begin{split}
&\int_V\mathrm{Im}(e^{-\sqrt{-1}\theta_0}(\pi^*\omega_{t,0}-\epsilon_{31}\pi^*\chi+(t+C_{34})\epsilon_{33}\omega_{30}
+\sqrt{-1}(\pi^*\chi+\epsilon_{33}\omega_{30}))^q)\\
\le&\int_V\mathrm{Im}(e^{-\sqrt{-1}\theta_0}(\pi^*\omega_{t,0}-\epsilon_{31}\pi^*\chi
+\sqrt{-1}\pi^*\chi)^q)
+\int_V\mathrm{Im}(e^{-\sqrt{-1}\theta_0}(t+C_{34}+\sqrt{-1})^q\epsilon_{33}^q\omega_{30}^q)\\
\le&-\sin(\theta_0)(n-q)\frac{\epsilon_1}{2}\int_V\pi^*\chi^q+\mathrm{Im}(e^{-\sqrt{-1}\theta_0}(t+C_{34}+\sqrt{-1})^q)\int_V (\epsilon_{33}\omega_{30})^q\\
\le&-\epsilon_{39}\int_V(\pi^*\chi+\epsilon_{33}\omega_{30})^q,
\end{split}\]
for any $q$-dimensional subvariety $V$ of $\tilde M$ and a constant $\epsilon_{39}$ independent of $t$ and $V$.

On the other hands, for $t\in[0,2]$, we get a similar estimate within $U_{36}$. As for the set $\tilde M\setminus\pi^{-1}(U_{36})$, we know that all the forms $\pi^*\omega_{35}$, $\pi^*\chi$, $\omega_{28}$ are all bounded using the norm defined by $\pi^*\chi+\epsilon_{33}\omega_{30}$ and moreover, $\pi^*\chi$ is also bounded below by positive constant multiple of $\pi^*\chi+\epsilon_{33}\omega_{30}$. So if $\epsilon_{33}$ is small enough, then the K\"ahler form $\pi^*\chi+\epsilon_{33}\omega_{30}$ and the test family $\pi^*\omega_{t,0}-\epsilon_{31}\pi^*\chi+(t+C_{34})\epsilon_{33}\omega_{30}$ on $\tilde M$ satisfies the assumption of Proposition \ref{Main-induction}. Since $E$ is smooth there, by induction hypothesis and the arguments in \cite{Chen}, there exists a smooth function $\varphi_{40}$ on $\tilde M$ such that
\[\pi^*\omega_{0}-\epsilon_{31}\pi^*\chi+C_{34}\epsilon_{33}\omega_{30}+\sqrt{-1}\partial\bar\partial\varphi_{40}
\in\Gamma_{\pi^*\chi+\epsilon_{33}\omega_{30},\theta_0,\Theta_0}\] on a neighborhood $U_{41}$ of $\pi^{-1}(Y)$. By a similar argument as in the proof of Lemma \ref{Subadditive-submatrix}, it implies that
\[\pi^*\omega_{0}-\epsilon_{31}\pi^*\chi+(C_{34}-\cot(\theta_0))\epsilon_{33}\omega_{30}+\sqrt{-1}\partial\bar\partial\varphi_{40}
\in\Gamma_{\pi^*\chi,\theta_0,\Theta_0}\] on $U_{41}\setminus \pi^{-1}(E)$.
So by choosing $(C_{34}-\cot(\theta_0))\epsilon_{33}C_{29}<\epsilon_{31}$, we see that
\[\pi^*\omega_{0}+\sqrt{-1}\partial\bar\partial(\varphi_{40}+(C_{34}-\cot(\theta_0))\epsilon_{33}\frac{\sqrt{-1}}{2\pi}\log|s|_h^2)
\in\Gamma_{\pi^*\chi,\theta_0,\Theta_0}\] on $U_{41}\setminus \pi^{-1}(E)$. Finally, we choose a large enough constant $C_{42}$ and define $\varphi_{27}$ as the regularized maximum of $\pi_*(\varphi_{40}+(C_{34}-\cot(\theta_0))\epsilon_{33}\frac{\sqrt{-1}}{2\pi}\log|s|_h^2)$ with $\varphi_{35}-C_{42}$.

\bibliographystyle{amsalpha}

\bibliography{dHYM}

 \end{document}